\newcommand{\Rmnum}[1]{\expandafter\@slowromancap\romannumeral #1@}
\newtheorem{theorem}{Theorem}
\newtheorem{corollary}{Corollary}
\newdefinition{definition}{Definition}
\newtheorem{lemma}{Lemma}
\newdefinition{proposition}{Proposition}
\newdefinition{assumption}{Assumption}
\newdefinition{example}{Example}
\newdefinition{case}{Case}
\newdefinition{remark}{Remark}
\begin{document}

\captionsetup[figure]{labelfont={bf},labelformat={default},labelsep=period,name={Fig.}}
\begin{frontmatter}
\title{Empathic network learning for multi-expert emergency decision-making under incomplete and inconsistent information}
\author[a]{Simin~Shen}
\ead{shensimin1013@163.com}
\author[a]{Zaiwu Gong\corref{cor1}}
\ead{zwgong26@163.com}
\author[a]{Bin Zhou}
\ead{binzhou@mail.ustc.edu.cn}
\author[b,c]{Roman S{\l}owi\'nski}
\ead{roman.slowinski@cs.put.poznan.pl}
\address[a]{Collaborative Innovation Center on Forecast and Evaluation of Meteorological Disasters, the Research Institute for Risk Governance and Emergency Decision-Making, School of Management Science and Engineering, Nanjing University of Information Science and Technology, Nanjing,210044,China}
\cortext[cor1]{Corresponding author}
\address[b]{Institute of Computing Science, Pozna\'n University of Technology, 60-965 Pozna\'n, Poland}
\address[c]{Systems Research Institute, Polish Academy of Sciences, 01-447 Warsaw, Poland}


\begin{abstract}
Challenges, such as a lack of information for emergency decision-making, time pressure, and limited knowledge of experts acting as decision-makers (DMs), can result in the generation of poor or inconsistent indirect information regarding DMs' preferences. Simultaneously, the empathic relationship represents a tangible social connection within the context of actual emergency decision-making, with the structure of the empathic network being a significant factor influencing the outcomes of the decision-making process. To deduce the empathic network underpinning the decision behaviors of DMs from incomplete and inconsistent preference information, we introduce an empathic network learning methodology rooted in the concept of robust ordinal regression via preference disaggregation. Firstly, we complete incomplete fuzzy judgment matrices including holistic preference information given in terms of decision examples on some reference alternatives, independently by each DM, and we calculate the intrinsic utilities of DMs. Secondly, we establish constraints for empathic network learning models based on empathic preference information and information about relations between some reference nodes. Then, the necessary and possible empathic relationships between any two DMs are calculated. Lastly, tailored to the specific requirements of different emergency scenarios, we design six target networks and construct models to derive the most representative empathic network.

\end{abstract}

\begin{keyword}
Emergency decision making; Incomplete information; Inconsistent information; Robust ordinal regression; Preference disaggregation; Empathic network learning models

\end{keyword}
\end{frontmatter}

\section{Introduction}
Unconventional emergencies, such as pandemics, earthquakes or fires, do not only pose a threat to human life and safety, but also bring a severe impact on the world economy. Due to the high destructiveness, complexity and variability associated with emergencies, it is necessary to coordinate the efforts of experts from various fields for effective collaboration in emergency decision-making. With the development of Internet technology, information dissemination and communication between individuals became more convenient. Therefore, when addressing real group decision-making problems, individuals may establish a social network with specific structural characteristics, and the structure of this social network is an important factor influencing the outcomes of decision-making \cite{1A.Godoy-Lorite2021,2J.Sun2023}. At present, the social relationships and their intensities among experts are usually reflected by the mutual relationship matrix constructed by experts using a subjective scoring method \cite{3S.M.Yu2021}. However, this approach may lead to excessive cognitive burden and information overload in time-critical emergency situations.

In actual emergency decision-making, a group of  experts need to assess a limited number of emergency alternatives and make decisions about them. As we know, emergency decision-making has a strong time limit, and experts tend to accept the opinions of other experts with whom they have social relations, which is conducive to improving the efficiency and quality of decision-making \cite{4X.Xu2020}. The decision-making behavior of experts, spanning from vaccine selection to voting preferences, depends on the structure of social networks \cite{1A.Godoy-Lorite2021}. However, experts are often only able to provide incomplete preference information for three reasons: first, due to the limitations of information, resources, and time, as well as expert cognition, it is difficult for experts to give a complete preorder of a subset of alternatives, but only a partial preorder, such as ``alternative $a$ is preferred to alternative $b$'' and other exemplary indirect preference information. Secondly, experts are often influenced by the opinions of other experts with whom they share empathic relationships, however, they have a partial understanding of the overall social network structure associated with the decision-making process. Therefore, only partial information about the network structure is known, such that, e.g.,  ``it is unknown if experts ${d_1}$ and ${d_2}$ have an empathic relationship''. Finally, because of the disagreement, the decision information provided by experts may be inconsistent. Therefore, the method proposed in this paper only requires experts to provide indirect preference information and partial node information, rather than asking them to provide a complete network structure, which can effectively reduce their cognitive load.

Empathy refers to an individual's capacity to understand the emotions of others and exhibit concern for the social welfare of others \cite{5P.Fontaine1997,6T.Singer2006}. This kind of phenomenon is common in the emergency decision making. Usually, in the process of emergency decision-making, the government  will give full consideration to the interests of the victims, and adopt specific measures for emergencies. For example, in order to minimize the economic damages from COVID-19 on workers and small businesses, the US federal government created the Paycheck Protection Program (PPP) \cite{7A.J.Staples2022}. In emergency decision-making, the empathic relationships between experts can help them consider issues from different perspectives, which is conducive to assessing alternatives and identifying potential risks, thereby making more comprehensive and effective decisions. Moreover, emergency decision-making often requires flexible responses to constantly changing situations. Empathy enables experts to better understand the behaviors and reactions of others, thereby enhancing adaptability and flexibility. Behavioral economics and neuroscience experiments have confirmed the existence of empathic preferences \cite{5P.Fontaine1997, 6T.Singer2006, 8J.Brandts2004, 9T.Singer2006}. In 2019, Salehi-Abari et al. \cite{10A.Salehi-Abari2019} introduced an empathic social choice framework for the first time in group decision-making, under which individual utility was decomposed into intrinsic utility and empathic utility to other individuals, and local and global empathic models were constructed. This work lays a theoretical foundation for the study of group decision making in the context of empathic network.

In the modern society, social networks are highly structured. The network structure directly affects its function and resilience \cite{11F.Kaiser2021}. In emergency circumstances, the network is extremely vulnerable to interference, and the breakage of a link may cause the paralysis of the communication in the network. Therefore, the research on network resilience in the field of emergency management has attracted extensive attention \cite{12X.Cao2022, 13S.R.Osman2023}. Resilience originated from systems ecology, and its basic meaning is the ability of a system to withstand external shocks and maintain its main functions in the event of a crisis \cite{14C.S.Holling1973}. Generally, the flatter the hierarchy of the network structure, the higher the degree of agglomeration and the more diverse the links between nodes, the stronger the anti-interference ability and the higher the resilience of the network \cite{15R.S.Burt1982}. In particular, if the influence of each node in the network is balanced, it is called a distributed network. The distributed network is decentralized and can avoid system paralysis caused by key nodes being attacked. Therefore, the distributed network system is widely used in the smart grid and the intelligent transportation system to improve system resilience \cite{16Y.Wang2020, 17X.Ge2020}. However, higher network resilience is not always better. For example, when in the face of disease transmission network, people tend to want it to be very low resilience. At the same time, higher network resilience means higher redundancy, which will reduce the efficiency of the network. When the influence of a node in the network is obvious compared with that of other nodes, it is called a central network. The central network is a typical network with low resilience but high efficiency. In addition, when the network structure is sparse enough, it can not only improve the network efficiency, but also meet the demand of rapid response.

Determining the network structure is usually the first step in any network analysis \cite{18L.Peel2022}. The research on social network inference first began in the field of sociology. In the 1970s, the problem of sampling and statistical inference of social networks was systematically raised, which refers to how to deduce group characteristics based on a small number of samples obtained from social networks \cite{19O.Frank1977,20O.Frank1980}. In the field of complex system, the application and research of social network sampling inference focus mainly on the description and estimation of topologies, attributes and other characteristics of large network systems, such as Facebook, Twitter, Sina Weibo and other online social networks \cite{21A.E.Mislove2009}. At present, there are many network-based statistical inference models, mainly including exponential random graph models \cite{22J.Xiong2020}, latent network models \cite{23R.van Bork2021} and stochastic block models \cite{24X.Lu2019}. It is worth mentioning that network statistical inference has become a hot field in systems biology \cite{25K.Faust2021,26N.F.Muller2022}. Biological network inference refers to the use of statistical methods to infer networks to predict new  biological mechanisms from partial prior knowledge such as interactions between biomolecules extracted from the analysis of large datasets. In addition, Benedetti et al. \cite{27E.Benedetti2020} proposed an optimization algorithm to infer the network having the maximum overlap with the available prior knowledge, and found that the obtained network was better than the statistical network.

In emergency decision-making scenarios, the incompleteness of information is a common issue. Due to time pressure and resource constraints, experts often find it difficult to provide comprehensive decision information. This lack of information poses a challenge in constructing an accurate decision-making model, as it may lead to inaccurate or impractical final decisions. Moreover, even when experts can provide some information, it is often fragmentary and incomplete. Against this backdrop, how to use limited information to infer network structures and thereby assist in forming effective decisions has become an urgent problem to be solved. This involves not only how to efficiently obtain key information from experts, but also how to infer network structures from this information and ensure that reliable and effective decision results can be generated based on these network structures. The motivation for our study arises from this challenge, with the aim of exploring a method that can infer network structures under incomplete information, by utilizing indirect preference information and partial node information provided by experts, to thereby assist in the emergency decision-making process. We proposes an empathic network learning method based on the idea of robust ordinal regression via preference disaggregation \cite{28S.Greco2008, 29J.R.Figueira2009, 30S.Greco2010}. Specifically:

\begin{itemize}
  \item According to the incomplete decision information of the experts and the nature of the empathic network itself, the linear constraints are formulated defining the set of compatible empathic networks. The incomplete decision information includes indirect preference information and partial node information. This decision information requires only low cognitive effort. In addition, for the case that the decision information provided by experts may be inconsistent, a mixed $\{0,1\}$ linear programming model inspired by \cite{30S.Greco2010} is formulated to find out and eliminate the minimum number of inconsistent constraints.

  \item Based on the idea of necessary and possible preference relations proposed in the robust ordinal regression approach  \cite{28S.Greco2008}, we give in this paper definitions of the necessary empathic and the possible empathic relationships in the network. The necessary empathic relationship holds for any two experts if and only if all compatible empathic networks have admitted they have an empathic relationship. A possible empathic relationship holds for any two experts if and only if there is at least one compatible empathic network that admits that they have an empathic relationship. The empathic network learning models are constructed to evaluate the possible and necessary empathic relationships between any two experts.

  \item We have defined the set of compatible empathic networks representing all possible empathic network structures and have studied how to select the most representative network structure from the compatible set. we provide experts with six types of target networks, i.e., the most discriminating empathic network, sparse empathic network, central and distributed empathic network, resilient empathic network and empathic networks with star, bus and tree topologies, and we set corresponding models to select a particular target network from the set of compatible empathic networks inspired by \cite{31M.Kadzinski2013}. The whole procedure ensures the robustness of the end results.
\end{itemize}

The remainder of the paper is organized as follows. Section 2 introduces the basic properties of empathic networks and the types of incomplete decision information. The empathic network learning method is put forward in Section 3. Section 4 presents a numerical example and Section 5 provides conclusions.

\section{Definitions and notation}
\subsection{Empathic networks}
Taking into account that decision information provided by DMs in different emergency decision-making situations is incomplete, this paper infers empathic relationships between DMs. Let us consider a finite set of alternatives $A = \{ {a_1},{a_2}, \cdots ,{a_m}\} $ and its index set $M=\{ 1,2, \cdots ,m\}$, as well as a finite set of DMs $D = \{ {d_1},{d_2}, \cdots ,{d_n}\}$ among which there exist empathic relationships; ${d_i}$ has empathy for ${d_j}$ indicates that ${d_i}$ is influenced by preferences of ${d_j}$. In this paper, the directed weighted graph $G(N,E)$ is used to represent the empathic network composed of all DMs present in the nodes $N = \{1,2, \cdots,n\}$. The directed arc $e(i, j)$ in $E$ indicates that ${d_i}$ has an empathic relation to ${d_j}$. The weight information on the directed arc is the empathic weight, which represents the intensity of the empathic relationship between ${d_i}$ and ${d_j}$. Clearly, an empathic network corresponds to an $n \times n$ empathic matrix. That is:

\begin{equation*}
W=\begin{pmatrix}
{{w_{11}}}&{{w_{12}}}& \cdots &{{w_{1n}}}\\
{w{}_{21}}&{{w_{22}}}& \cdots &{{w_{2n}}}\\
 \vdots & \vdots &{}& \vdots \\
{{w_{n1}}}&{w{}_{n2}}& \cdots &{{w_{nn}}}\\
\end{pmatrix},
\end{equation*}
where ${w_{ij}} \ge 0$ $(i \ne j)$ indicates that the empathic relationships between DMs have positive effects, and there is no jealousy or malice; ${w_{ii}} > 0$, $i= 1,2, \cdots ,n$ indicates that the DMs are not completely altruistic, and they retain part of their intrinsic preferences while paying attention to the preferences of their immediate neighbors. Moreover, row sum of the empathic matrix is 1, that is, ${w_{jj}} + \sum\limits_{k \ne j} {{w_{jk}}}  = 1$, indicating that the sum of the influences of ${d_j}$'s immediate neighbors on ${d_j}$ is 1.

\begin{definition}\cite{10A.Salehi-Abari2019}
 The $j$-th column sum of the empathic matrix is equal to the empathic centrality ${\omega _j}$ of ${d_j}$. That is
\begin{equation}
{\omega _j} = \sum\limits_{k = 1}^n {{w_{kj}}},
\end{equation}
indicating the influence of ${d_j}$ in the entire empathic network.
\end{definition}

Greater empathic centrality ${\omega _j}$ means greater influence of ${d_j}$ in the empathic network. It is easy to observe that ${\omega _1} + {\omega _2} +  \cdots  + {\omega _n} = n$.

Considering the transitivity of empathic relationships, Salehi-Abari et al. \cite{10A.Salehi-Abari2019} introduced the concept of local empathic and global empathic networks. The local empathy means that an individual pays attention to the preferences of only neighboring individuals, while global empathy means that an individual pays attention to the preferences of all individuals. It is common to define the above empathic matrix $W$ as a local empathic weight matrix. In a local empathic network, the empathic utility ${u_j}(a_s)$ of ${d_j}$ about alternative ${a_s} \in A, s \in M$, after being influenced by the empathic relationships is
\begin{equation}
{u_j}(a_s) = {w_{jj}}u_j^I(a_s) + \sum\limits_{k \in {N_j}} {{w_{jk}}} u_k^I(a_s).
\end{equation}

The corresponding matrix form is
\begin{equation}
U\left( a_s \right) = W{U^I}(a_s),
\end{equation}
where $U(a_s) = {({u_1}(a_s),{u_2}(a_s), \cdots {u_n}(a_s))^T}$,  $ \ {U^I}(a_s) = {(u_1^I(a_s),u_2^I(a_s), \cdots u_n^I(a_s))^T}$.

In global empathic networks, the empathic utility of ${d_j}$ about alternative $a_s \in A, s \in M$ after being influenced by the empathic relationships is
\begin{equation}
{u_j}\left( a_s \right) = {w_{jj}}u_j^I(a_s) + \sum\limits_{k \in {N_j}} {{w_{jk}}} u_k(a_s).
\end{equation}

The corresponding matrix form is
\begin{equation}
U\left( a_s \right) = {(I - W + D)^{ - 1}}D{U^I}(a_s),
\end{equation}
where $I$ is an $n \times n$ identity matrix and $D$ is an $n \times n$ diagonal matrix with ${d_{jj}} = {w_{jj}},j \in N$. In particular, $I - W + D$ is invertible as guaranteed by ${w_{ij}} \ge 0$, $ \ i,j\in N$, $ \  i \ne j$, $\ {w_{jj}} > 0$, $ \ j\in N$ and $ \ {w_{jj}} + \sum\limits_{k \ne j} {{w_{jk}}}  = 1$ \cite{10A.Salehi-Abari2019}. Then
\begin{equation}
U\left( a_s \right) = G{U^I}(a_s),
\end{equation}
where
\begin{equation}
G = {(I - W + D)^{ - 1}}D
\end{equation}
is the global empathic weight matrix.

\subsection{Fuzzy judgment matrix and eigenvector method}\label{eigenvector}
DMs usually express preferences in the form of pairwise comparisons and construct a fuzzy judgment matrix. Each DM provides the comparative preference information for set $A$ of alternatives \cite{32T. Tanino1984}, where the preference judgment is represented by a real value in the fuzzy judgment matrix.

\begin{definition}\cite{32T. Tanino1984}
If the pairwise comparison matrix $R = {({r_{st}})_{m \times m}}$ has the following properties:

(a) ${r_{ss}} = 0.5$, $\forall s \in M$;

(b) ${r_{st}} + {r_{ts}} = 1$, $\forall s,t \in M$,

\noindent then $R$ is referred to as a fuzzy judgment matrix, where the element $r_{st}$ represents the degree to which alternative ${a_s}$ is preferred to alternative ${a_t}$. ${r_{st}} = 0.5$ indicates that there is indifference between ${a_s}$ and ${a_t}$; $0 \le {r_{st}} < 0.5$ means that ${a_t}$ is preferred to ${a_s}$; $0.5 < {r_{st}} \le 1$ means that ${a_s}$  is preferred to ${a_t}$.
\end{definition}

\begin{definition}\cite{32T. Tanino1984}
Let $R = {({r_{st}})_{m \times m}}$ be a fuzzy judgment matrix. If matrix $R$ satisfies the following property:
\begin{equation}
{r_{ik}} + {r_{kj}} = {r_{ij}} + 0.5, \ i,k,j \in M,
\end{equation}
then $R$ is said to satisfy additive consistency.
\end{definition}

Saaty \cite{33T. L. Saaty1977} applied the eigenvector method in the Analytic Hierarchy Process (AHP) to calculate the maximum eigenvalue and the corresponding eigenvector of the judgment matrix. In this paper, we use the eigenvector method to calculate the maximum eigenvalue and the corresponding eigenvector of the fuzzy judgment matrix $R$ according to the following equation:
\begin{equation}
R{\bf{w}} = {\lambda _{\max }}{\bf{w}},
\end{equation}
where $\lambda _{\max }$ is the maximum eigenvalue of $R$, and $\bf{w}$ is the corresponding eigenvector.

\subsection{Decision information}
Decision information provided by DMs is composed of preference information concerning reference alternatives and network node information about empatic relations between DMs. In the proposed method, since the calculation of intrinsic utilities is necessary, all DMs provide the fuzzy judgment matrices independently on the whole set $A$, without being affected by the empathic relationships with each other. If the DMs can provide the complete fuzzy judgment matrices, then we use the eigenvector method to obtain the intrinsic utilities of the DMs. Note, however, that due to urgency of emergency decision-making and the limited cognition of DMs, the fuzzy judgment matrices provided by the DMs may be incomplete. In this case, we can use the robust ordinal regression approach to complete the incomplete judgment matrices and then calculate intrinsic utilities. Therefore, the DMs can provide incomplete decision information, that is, indirect preference information on subsets of alternatives. This information is then used to complete the incomplete judgment matrices.

Once intrinsic utilities are obtained, the DMs are allowed to communicate and the empathic relationships between them affect their preferences. Thus, the DMs are asked to provide incomplete decision information composed of indirect preference information on subsets of alternatives and partial network node information on subsets of network nodes.

First, we describe how we capture intrinsic utilities when DMs can provide incomplete judgment matrices only. Each DM makes independently pairwise comparisons of the alternatives from set $A$, giving an $m \times m$ judgment matrix of alternatives, representing their own intrinsic preferences. Due to the complexity of emergency decision-making and the limitation of expert cognition, some upper triangular elements of the judgment matrix may be missing. Suppose that each DM provides an incomplete fuzzy judgment matrix  $R = {({r_{st}})_{m \times m}}$ on set $A$, satisfying ${r_{ss}} = 0.5$, ${r_{st}} \ge 0$, ${r_{st}} + {r_{ts}} = 1$, $s,t \in M $, having the following form \cite{34Z.Xu2006}:
\begin{equation}
R = \left( {\begin{array}{*{20}{c}}
0.5&*&{{r_{13}}}& \cdots &{{r_{1m}}}\\
*&0.5&*& \cdots &{{r_{2m}}}\\
{{r_{31}}}&*&0.5& \cdots &*\\
*&{{r_{m - 1,2}}}&*& \ddots &{{r_{m - 1,m}}}\\
{{r_{m1}}}&{{r_{m2}}}&*& \cdots &0.5
\end{array}} \right),
\end{equation}
where $*$ is the missing term of $R$. Furthermore, we assume that the incomplete fuzzy judgment matrix satisfies the additive consistency ${r_{ik}} + {r_{kj}} = {r_{ij}} + 0.5, \ i,k,j \in M$ \cite{35S.Alonso2004}.

Usually, due to the heterogeneity of knowledge and diversity of opinions, it is difficult for DMs to provide reliable direct preference information about parameters of their preference model, but they are usually able to provide exemplary indirect preference information, such as pairwise comparisons between alternatives \cite{28S.Greco2008}. Therefore, we assume that each DM can define a subset of reference alternatives $A_j^R = \{{a_{1_j}},{a_{2_j}}, \cdots ,{a_{m_j}}\}  \subseteq A$ and express their  preferences and preference intensity information through the partial preorder $\succeq$ in the set of reference alternatives and the partial preorder $\succeq^{*}$ underlying the intensity of preference among pairs of reference alternatives, as proposed in \cite{29J.R.Figueira2009}, as shown below:

\begin{itemize}
\item a partial preorder $\succeq^{j}$ on ${A_j^R}$, $j \in N$, whose semantics for $({a_s},{a_t})\in  {A_j^R}$ is, ${a_s}\succeq^{j}{a_t}\Leftrightarrow$ ${d_j}$ believes ${a_s}$ is at least as good as ${a_t}$.
\end{itemize}

\begin{itemize}
\item a partial preorder $\succeq^{*j}$ on ${A_j^R}$, $j \in N$, whose semantics for $ ({a_s},{a_t}),({a_p},{a_q})\in {A_j^R}$ is, $({a_s}, {a_t}) \succeq^{*j}({a_p},{a_q})\Leftrightarrow$ ${d_j}$ believes that ${a_s}$ is preferred to ${a_t}$ at least as much as ${a_p}$ is preferred to ${a_q}$.
\end{itemize}

Based on this information, the robust ordinal regression method \cite{28S.Greco2008, 29J.R.Figueira2009, 30S.Greco2010} is used to complete the incomplete fuzzy judgment matrices, and then the intrinsic utilities of each DM are obtained using the eigenvector method.

Then, intrinsic utilities of all DMs are made public, and communication between DMs is allowed. At this stage, the preferences of the DMs will change under the influence of the empathic relationships. We assume that the DMs take ${A^R} = \mathop  \bigcup \limits_{j \in N} A_j^R \subseteq A$ as a subset of reference alternatives, and express on $A^R$ their empathic preferences, and on $A^R \times A^R$ the information on the intensity of empathic preferences, through the partial preorders $\succeq$ and $\succeq^{*}$, respectively. It should be noted that if DMs provide the complete fuzzy judgment matrices, then they do not need to provide $A_j^R$, but only $A^R$ directly. In addition, the DMs can also provide a set of reference nodes ${N^R} \subseteq N$ to give node information in the empathic network, such as ``node $i$ has about the same degree of empathy for nodes $j$ and $k$" or ``the influence of node $l$ on node $i$ is more than a half of the empathic centrality of node $l$".

Further, consider the preference relation $\succeq$ and the intensity of preference relation $\succeq^{*}$ on ${A^R}$, both of which are weak preference relations that can be decomposed into symmetric part $\sim$ ($\sim^{*}$) and asymmetric part $\succ$ ($\succ^{*}$). For example, $\forall {a_s},{a_t},{a_p},{a_q}\in {A^R}$,
\begin{itemize}
  \item ${a_s} \succ {a_t} \equiv [{a_s}\succeq{a_t} \hspace{0.1cm} \mbox{and} \hspace{0.1cm} not({a_t}\succeq{a_s})] \Leftrightarrow$ ${a_s}$ is preferred to ${a_t}$.
  \item ${a_s} \sim {a_t} \equiv [{a_s}\succeq{a_t} \hspace{0.1cm} \mbox{and} \hspace{0.1cm} {a_t}\succeq{a_s}] \Leftrightarrow$ ${a_s}$ is indifferent to ${a_t}$.
  \item $({a_s},{a_t}){ \succ ^*}({a_p},{a_q}) \equiv [({a_s},{a_t})\succeq^*({a_p},{a_q}) \hspace{0.1cm} \mbox{and} \hspace{0.1cm} not(({a_p},{a_q})\succeq^*({a_s},{a_t}))] \Leftrightarrow$ the degree to which ${a_s}$ is preferred to ${a_t}$ is stronger than the degree to which ${a_p}$ is preferred to ${a_q}$.
   \item $({a_s},{a_t}){ \sim ^*}({a_p},{a_q}) \equiv [({a_s},{a_t})\succeq^*({a_p},{a_q}) \hspace{0.1cm} \mbox{and} \hspace{0.1cm}
       ({a_p},{a_q})\succeq^*({a_s},{a_t})] \Leftrightarrow$  there is no difference in the degree to which ${a_s}$ is preferred to ${a_t}$ and the degree to which ${a_p}$ is preferred to ${a_q}$.
\end{itemize}

\section{Introduction of the empathic network learning method}
In this section, we present a comprehensive description of the empathic network learning methodology, including the elicitation of incomplete decision information by DMs, calculation of intrinsic utilities, three types of constraints on network learning models, the computation of binary empathic relations and the identification and removal of inconsistent information, as well as how to select representative empathic networks from the set of compatible empathic networks. The flow chart of the empathic network learning method is shown in Fig.~\ref{f}.
\begin{figure}[!h]
\centering
\includegraphics[width=0.6\textwidth]{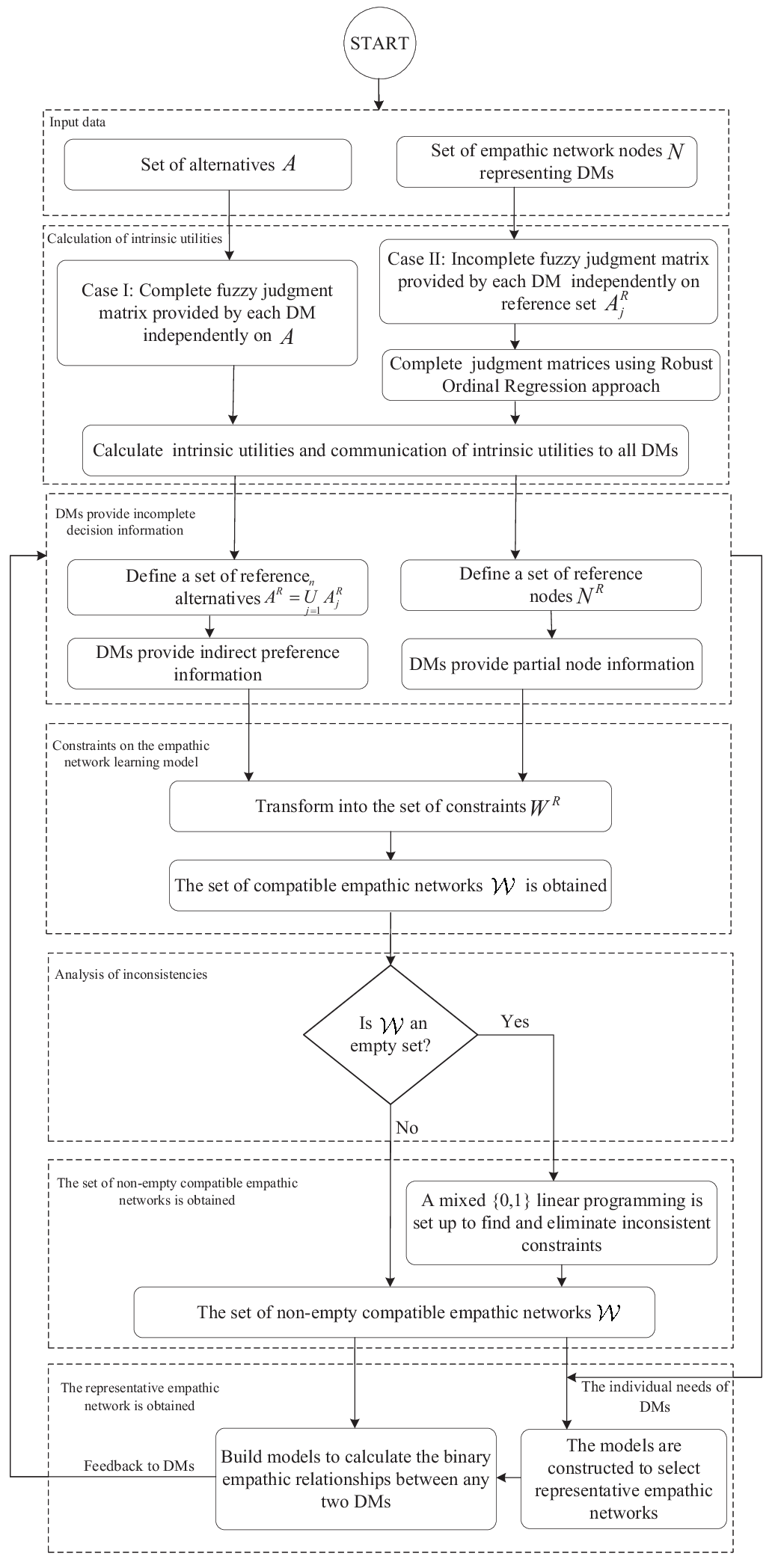}
\caption{The flow chart of the empathic network learning method.}\label{f}
\end{figure}

\subsection{Elicitation of incomplete decision information}
In our method, DMs provide incomplete holistic decision information on ${A^R}$ and ${N^R}$.

\vspace{6pt}

(a) Preference information on ${A^R}$.

\begin{itemize}
\item a partial preorder $\succeq^{d_j}$ on ${A^R}$, $ {d_j} \in D$, whose semantics for $({a_s},{a_t})\in  {A^R}$ is, ${a_s}\succeq^{d_j}{a_t}\Leftrightarrow$ ${d_j}$ believes ${a_s}$ is at least as good as ${a_t}$.
\end{itemize}

\begin{itemize}
\item a partial preorder $\succeq^{*d_j}$ on ${A^R}$, $ {d_j} \in D$, whose semantics for $ ({a_s},{a_t}),({a_p},{a_q})\in {A^R}$ is, $({a_s}, {a_t}) \succeq^{*d_j}({a_p},{a_q})\Leftrightarrow$ ${d_j}$ believes that ${a_s}$ is preferred to ${a_t}$ at least as much as ${a_p}$ is preferred to ${a_q}$.
\end{itemize}

(b) Partial node information on ${N^R}$.

\begin{itemize}
\item  ${w_{ij}} = 0, \ i,j \in N^R$, means that ${d_i}$ is not empathic to ${d_j}$, $\ {d_i},{d_j} \in D$.
\end{itemize}

\begin{itemize}
\item ${w_{ij}} \ge {w_{kh}}, \ i,j,k,h \in N^R$, means that the empathic intensity of ${d_i}$ to ${d_j}$ is at least as strong as the empathic intensity of ${d_k}$ to ${d_h}$, $\ {d_i},{d_j},{d_k},{d_h} \in D$.
\end{itemize}

\begin{itemize}
\item $\frac{{{w_{ij}}}}{{{\omega _j}}} \ge \frac{1}{2}, \ i,j \in N^R$, means that the empathic influence of ${d_j}$ to ${d_i}$ is not less than half of the total influence of ${d_j}$, $\ {d_i},{d_j} \in D$.
\end{itemize}

\begin{itemize}
\item ${\omega _i} - {\omega _j} \ge {\omega _k} - {\omega _h}, \ i,j,k,h \in N^R$, means that the empathic influence of ${d_i}$ exceeds that of ${d_j}$ by at least as much as the influence of ${d_k}$ exceeds that of ${d_h}$, $\ {d_i},{d_j},{d_k},{d_h} \in D$.
\end{itemize}

\subsection{Calculation of intrinsic utilities}
For the DMs' intrinsic utilities of alternatives, we consider two cases: (1) DMs provide complete fuzzy judgment matrices independently. (2) DMs provide incomplete fuzzy judgment matrices independently. We present a way to obtain intrinsic utilities based on the preference disaggregation method and fuzzy judgment matrix theory. In the following, we will detail the calculation of intrinsic utilities in two cases.

Case \Rmnum {1}: DMs provide complete fuzzy judgment matrices independently.

We use the eigenvector method of subsection~\ref{eigenvector} to obtain the eigenvector corresponding to the maximum eigenvalue of the fuzzy judgment matrix, and take this eigenvector as the intrinsic utilities of the DM about all alternatives. The intrinsic utility matrix  $U^I$ has the following form:
\begin{equation}
{U^I} = \left( {\begin{array}{*{20}{c}}
{U_1^I({{a_1}})}&{U_1^I({{a_2}})}& \cdots &{U_1^I({{a_m}})}\\
{U_2^I({{a_1}})}&{U_2^I({{a_2}})}& \cdots &{U_2^I({{a_m}})}\\
 \vdots & \vdots &{}& \vdots \\
{U_n^I({{a_1}})}&{U_n^I({{a_2}})}& \cdots &{U_n^I({{a_m}})}
\end{array}} \right),
\end{equation}
where $U_j^I({{a_s}})$ is an intrinsic utility of ${d_j}$ with respect to alternative ${a_s}$, $\forall j \in N$, $\forall {d_j} \in D$, $\forall {a_s} \in A$.

Case \Rmnum {2}: DMs provide incomplete fuzzy judgment matrices independently.

Based on holistic preference information provided by each DM $d_j$ on $A_j^R$ in the form of the incomplete fuzzy judgment matrix satisfying additive consistency, we apply the robust ordinal regression approach to complete the incomplete fuzzy judgment matrix through resolution of the following linear programming problem:
\begin{align*}
\begin{array}{r@{~}l}
\quad &\text{(Model 1)}\hspace{0.1cm}\max \varepsilon \\[10pt]
\quad &\text{\textup{subject to}}
\begin{cases}
\hspace{0.1cm} {r_{st}} \ge 0.5 +\varepsilon , \mbox{ if }  {a_s}{\succ ^j}{a_t} \hspace{0.1cm} \text { for } {a_s},{a_t}\in {A_j^R},j \in N,\\[10pt]
\hspace{0.1cm} {r_{st}} \ge {r_{pq}} + \varepsilon , \mbox{ if }  ({a_s}, {a_t}) \succeq^{*j}({a_p},{a_q}) \hspace{0.1cm} \text { for } {a_s},{a_t}\in {A_j^R},j \in N,\\[10pt]
\hspace{0.1cm} {r_{sp}} + {r_{pt}} = {r_{st}} + 0.5  \hspace{0.1cm} \text {for}\hspace{0.1cm} s < t < p, s,t,p\in M,\\[10pt]
\hspace{0.1cm} 0 \le {r_{st}} \le 1  \hspace{0.1cm} \text {for} \hspace{0.1cm} s,t \in M,
\end{cases}
\end{array}
\end{align*}
where $\varepsilon > 0$ is a slack variable. If the optimal value ${\varepsilon ^*} > 0$ is obtained, this indicates that the incomplete fuzzy judgment matrix can be completed. If Model 1 does not have a feasible solution, or ${\varepsilon ^*} \le 0$, this indicates that the indirect preference information provided by the DM $d_j$ is inconsistent. In this case, we introduce $\{0,1\}$ variables ${\nu_r}, \ r = 1, \cdots ,l$, and build a model to find and eliminate inconsistent preference information. This specific operation is similar to that in subsection~\ref{inconsistencies}. Due to space limitations, we do not present it here. After obtaining the complete fuzzy judgment matrix of each DM, we still use the eigenvector method to calculate the intrinsic utility matrix.

\subsection{Constraints and binary empathic relations}
In this subsection, we formulate the constraints of the empathic network learning models that characterize the set of compatible empathic networks. On this basis, we calculate the binary relations between DMs.

\subsubsection{Constraints on the empathic network learning model}
Utilizing the preference information and node information provided by the DMs, we can formulate the set of constraints ${W^R}$ that the empathic network should satisfy. In the following, we present three categories of constraints.

Firstly, we have to ensure that empathic preferences and empathic utility of the DMs are consistent. Then, according to the empathic preference information and the intensity of preference provided by every ${d_j} \in D$, we get the set of constraints ${W^{{A^R}}}$ as follows:

(a) ${u_j}({{a_s}}) \ge {u_j}({{a_t}}) + \varepsilon $, if ${a_s}\succ^{d_j}{a_t}$,

(b) ${u_j}({{a_s}}) = {u_j}({{a_t}})$, if ${a_s}\sim^{d_j}{a_t}$,

(c) ${u_j}({{a_s}}) - {u_j}({{a_t}}) \ge {u_j}({{a_p}}) - {u_j}({{a_q}}) + \varepsilon $, if $({{a_s},{a_t}}){ \succ ^{*{d_j}}}({{a_p},{a_q}})$,

(d) ${u_j}({{a_s}}) - {u_j}({{a_{t}}}) = {u_j}({{a_{p}}}) - {u_j}({{a_{q}}}) $, if $({{a_s}, {a_{t}}}){ \sim ^{*{d_j}}}( {{a_{p}},{a_{q}}})$,

\noindent where ${a_s},{a_t},{a_p},{a_q}\in {A^R}$, and $\varepsilon > 0$ is a slack variable.

Secondly, according to the node information provided by the DMs, the set of constraints ${W^{{N^R}}}$ can be written as follows:

(e) ${w_{ij}} = 0$,

(f) ${w_{ij}} \ge {w_{kh}} + \varepsilon $, if ${w_{ij}} > {w_{kh}}$,

(g) $\frac{{{w_{ij}}}}{{{\omega _j}}} \ge \frac{1}{2}$,

(h) ${\omega _i} - {\omega _j} \ge {\omega _k} - {\omega _h} + \varepsilon $, if ${\omega _i} - {\omega _j} > {\omega _k} - {\omega _h}$,

\noindent where $i,j,k,h \in {N^R}$, and $\varepsilon $ is the same as above.

Finally, according to the properties of the empathic matrix itself, we can write the set of basic constraints ${W^{base}}$ as follows:

(i) $\sum\limits_{k = 1}^n {{w_{jk}}}  = 1$, $\ \forall j \in N$,

(j) $w_{ij}^{} \ge 0$, $\ \forall i,j \in N \hspace{0.1cm} and \hspace{0.1cm} i \ne j$,

(k) ${w_{jj}} \ge \varepsilon^\prime$, $ \ \forall j \in N$,

\noindent where ${w_{jj}} > 0$ is written as ${w_{jj}} \ge \varepsilon^\prime$. The reason is that when the intensity of the empathic relationship between two nodes is close to 0, it is difficult to reflect the empathic relationship and play a role in the evolution of the opinions in the network. Therefore, we set a non-zero empathic relationship intensity threshold $\varepsilon^\prime > 0$ to better capture the empathic relationships between nodes.

To sum up, the following set of constraints ${W^R}$ defines an empathic network able to reproduce the preference and node information provided by DMs:

\begin{align*}
W^R
\begin{array}{r@{~}l}
\begin{cases}
W^{{A^R}}
&\begin{cases}
\hspace{0.1cm}\quad &{u_j}({{a_s}}) \ge {u_j}({{a_t}}) + \varepsilon , \mbox{ if }  {a_s}{ \succ ^{{d_j}}}{a_t} \hspace{0.1cm} \text { for } {a_s},{a_t}\in {A^R},{d_j} \in D, \\
\hspace{0.1cm}\quad &{u_j}({{a_s}}) = {u_j}({{a_t}}), \mbox{ if } {a_s}{ \sim ^{{d_j}}}{a_t}
\hspace{0.1cm} \text { for } {a_s},{a_t}\in {A^R},{d_j} \in D, \\
\hspace{0.1cm} \quad & {u_j}({{a_s}}) - {u_j}({{a_t}}) \ge {u_j}({{a_p}} ) - {u_j}( {{a_q}})+ \varepsilon, \mbox{ if } ({{a_s},{a_t}}){ \succ ^{*{d_j}}}({{a_p},{a_q}})\\
&\hspace{0.1cm} \text { for } {a_s},{a_t},{a_p},{a_q}\in {A^R},{d_j} \in D,\\
\hspace{0.1cm} \quad & {u_j}({{a_s}}) - {u_j}({{a_t}}) = {u_j}({{a_p}}) - {u_j}( {{a_q}}) , \mbox{ if } ({{a_s},{a_t}}){ \sim^{*{d_j}}}({{a_p},{a_q}})\\
&\hspace{0.1cm} \text { for } {a_s},{a_t},{a_p},{a_q}\in {A^R},{d_j} \in D,\\
\hspace{0.1cm}\quad & \cdots
\end{cases}\\[10pt]
W^{{N^R}}
&\begin{cases}
\hspace{0.1cm} \quad &{w_{ij}} = 0 \hspace{0.1cm} \text { for } i,j\in {N^R},\\
\hspace{0.1cm}\quad &{w_{ij}} \ge {w_{kh}} + \varepsilon  \hspace{0.1cm} \text { for } i,j,k,h\in {N^R},\\
\hspace{0.1cm}\quad &\frac{{{w_{ij}}}}{{{\omega _j}}} \ge \frac{1}{2} \hspace{0.1cm} \text { for } i,j\in {N^R},\\
\hspace{0.1cm} \quad & {\omega _i} - {\omega _j} \ge {\omega _k} - {\omega _h} + \varepsilon \hspace{0.1cm} \text { for } i,j,k,h\in {N^R},\\[15pt]
\hspace{0.1cm}\quad & \cdots
\end{cases}\\[10pt]
W^{base}
&\begin{cases}
\hspace{0.1cm} \quad &\sum\limits_{k = 1}^n {{w_{jk}}}  = 1, \ \forall j \in N,\\
\hspace{0.1cm}\quad &w_{ij}^{} \ge 0, \ \forall i,j \in N \mbox{ and } i \ne j,\\
\hspace{0.1cm} \quad & {w_{jj}} \ge \varepsilon^\prime, \ \forall j \in N.
\end{cases}
\end{cases}
\end{array}
\end{align*}

Therefore, an empathic network satisfying ${W^R}$ is called a compatible empathic network, and the set of compatible empathic networks is denoted by $\mathcal{W}$.

\subsubsection{Necessary and possible empathic relationships}
This subsection provides definitions of binary empathic relationships including necessary and possible empathic relationships, as well as the models to calculate the binary empathic relationships between any two DMs.

\begin{definition}
For any two DMs $\ {d_i},{d_j} \in D$, if all the empathic networks in the set of compatible empathic networks $\mathcal{W}$ satisfy ${w_{ij}} > 0$, $i,j \in N$, then ${d_i}$ has a necessary empathic relationship with ${d_j}$, denoted by ${d_i}{N^l}{d_j}$.
\end{definition}
\begin{definition}
For any two DMs $\ {d_i},{d_j} \in D$, if there exists at least one empathic network satisfying ${w_{ij}} > 0$, $i,j \in N$ in the set of compatible empathic networks $\mathcal{W}$, then ${d_i}$ has a possible empathic relationship with ${d_j}$, denoted by ${d_i}{P^l}{d_j}$
\end{definition}

In order to calculate the binary empathic relationships ${N^l}$ and ${P^l}$, for $\forall {d_i},{d_j} \in D$, we propose the following two models:
\begin{align*}
\begin{array}{r@{~}l}
\quad &\text{(Model 2)}\hspace{0.1cm}\max \varepsilon \\[18pt]
\quad &\text{\textup{subject to}}
\begin{cases}
\hspace{0.1cm} {w_{ij}} = 0 \hspace{0.1cm} \text { for } i,j \in N,\\[18pt]
\hspace{0.1cm} {W^R}.
\end{cases}
\end{array}
\end{align*}

If Model 2 has no solution or optimal solution ${\varepsilon ^*} < 0$, then ${d_i}{N^l}{d_j}$.

\begin{align*}
\begin{array}{r@{~}l}
\quad &\text{(Model 3)}\hspace{0.1cm}\max \varepsilon \\[18pt]
\quad &\text{\textup{subject to}}
\begin{cases}
\hspace{0.1cm} {w_{ij}} \ge \varepsilon^\prime \hspace{0.1cm} \text { for } i,j \in N,\\[18pt]
\hspace{0.1cm} {W^R},
\end{cases}
\end{array}
\end{align*}
where ${w_{ij}} \ge \varepsilon^\prime$ is replacing ${w_{ij}} > 0$, and $\varepsilon^\prime$ is the empathic relationship intensity threshold. If ${\varepsilon ^*} > 0$ is the optimal solution of Model 3, then ${d_i}{P^l}{d_j}$.

Calculated by Model 2 and Model 3, the binary empathic relationships $N^l$ and $P^l$ can be seen as results of the current phase of interaction with the DMs. With this information, DMs can deepen their understanding of the overall network structure and, in consequence,  they may wish to provide additional preference and node information. This interactive decision-making process promotes accuracy and reliability in decision support.

\subsubsection{Analysis of inconsistencies}\label{inconsistencies}
Observe that the set of feasible solutions to ${W^R}$ corresponding to the set of compatible empathic networks $\mathcal{W}$ may be empty, and one of the most likely reasons is that there are inconsistencies in the preference information and the node information given by the DMs. In this case, the decision analyst can interactively help the DMs to remove the piece of troublesome information about preferences concerning the reference alternatives, or information about the intensities of the empathic relationships, or information about the nodes. In order to identify these inconsistencies, we formulate a mixed $\{0,1\}$ linear programming problem. To this end, we introduce $\{0,1\}$ variables ${\nu_r}, \ r = 1, \cdots ,l$, corresponding to constraints of ${W^R}$ other than ${W ^ {base}}$. When variable  ${\nu_r}, \ r \in \{1, \cdots ,l\}$ is equal to 1, then the $r$-th constraint corresponding to an information item causing inconsistency is eliminated. Thus, the aim is to minimize the sum of all ${\nu_r}, \ r = 1, \cdots ,l$, subject to the modified constraints  ${W^R}$. The specific mixed $\{0,1\}$ linear programming problem is as follows:

\begin{align*}
\quad & \text{(Model 4)}\hspace{0.1cm} \min \sum\limits_{r = 1}^l {{\nu _r}} \\
\quad & \text{\textup{subject to}}
\begin{array}{r@{~}l}
\begin{cases}
{W^{{A^R}}}^\prime
\begin{cases}
\hspace{0.1cm}{u_j}({a_s}) - {u_j}({{a_t}}) + M{\nu _r} \ge \varepsilon , \mbox{ if } {a_s}{ \succ ^{{d_j}}}{a_t}
\hspace{0.1cm} \text { for } {a_s},{a_t}\in {A^R},{d_j} \in D,\\[10pt]
\mbox{ if } {a_s}{ \sim ^{{d_j}}}{a_t}\hspace{0.1cm} \text { for } {a_s},{a_t}\in {A^R}, \ {d_j} \in D
\begin{cases}
\hspace{0.1cm}{u_j}({a_s}) - {u_j}({a_t}) + M{\nu _r} \ge 0,\\[10pt]
\hspace{0.1cm}{u_j}({a_t}) - {u_j}({a_s}) + M{\nu _r} \ge 0,
\end{cases}\\[10pt]
\hspace{0.1cm}{u_j}({a_s}) - {u_j}({a_t}) + M{\nu _r} \ge {u_j}({{a_p}}) - {u_j}({a_q}) + \varepsilon ,
\mbox{ if } ({a_s},{a_t}){ \succ ^{*{d_j}}}({a_p},{a_q})\\[10pt]\hspace{0.1cm} \text { for } {a_s},{a_t},{a_p},{a_q}\in {A^R}, \ {d_j} \in D,\vspace{2ex}\\
\mbox{ if } ({{a_s},{a_t}}){ \sim ^{*{d_j}}}({{a_p},{a_q}})
\begin{cases}
\hspace{0.1cm} ({u_j}({a_s}) - {u_j}({a_t}))-({u_j}({a_p}) - {u_j}({a_q}))+ M{\nu _r} \ge 0\\
\hspace{0.1cm}  ({u_j}({a_p}) - {u_j}({a_q}))-({u_j}({{a_s}}) - {u_j}({a_t}))+ M{\nu _r} \ge 0
\end{cases}\vspace{2ex}\\[10pt]
\hspace{0.1cm} \text {for } {a_s},{a_t},{a_p},{a_q}\in {A^R}, \ {d_j} \in D,
\end{cases}\\[10pt]
{W^{{N^R}}}^\prime
\begin{cases}
 \ {w_{ij}} = 0\hspace{0.1cm} \text { for } i,j\in {N^R}
\begin{cases}
\hspace{0.1cm}{w_{ij}} + M{\nu _r} \ge 0,\\[10pt]
\hspace{0.1cm} - {w_{ij}} + M{\nu _r} \ge 0,
\end{cases}\\[10pt]
\hspace{0.1cm}{w_{ij}}-{w_{kh}}  + M{\nu _r} \ge \varepsilon \hspace{0.1cm} \text { for } i,j,k,h\in {N^R},\\[10pt]
\hspace{0.1cm}{w_{ij}} - \frac{1}{2}{\omega _j} + M{\nu _r} \ge 0 \hspace{0.1cm} \text { for } i,j\in {N^R},\\[10pt]
\hspace{0.1cm}{\omega _i} - {\omega _j} + M{\nu _r} \ge {\omega _k} - {\omega _h} + \varepsilon \hspace{0.1cm} \text { for } i,j,k,h\in {N^R},
\end{cases}\\[10pt]
{\nu _r} \in \{ 0,1\} \hspace{0.1cm}, \ r = 1,2, \cdots,l,\\[10pt]
{W^{base},}
\end{cases}
\end{array}
\end{align*}
where $M$ is a positive large number.

The optimal solution of Model 4 gives one of possibly many sets of inconsistent constraints with the smallest cardinality. It is composed of constraints getting ${v_r} = 1$. Other sets of inconsistent constraints can be obtained by solving Model 4 with the additional constraint $\sum\limits_{r \in T} {{v_r} \le |T|}  - 1$, where $T$ is the set of constraints corresponding to ${v_r} = 1$ in the last optimal solution of Model 4. This additional constraint prevents Model 4 from finding the same solution again. Finally, the subsets of inconsistent constraints are presented to the DMs who may select which one should be eliminated from the preference and network node information to ensure feasibility of ${W^R}$.

\subsection{Selection of a representative empathic network}
The feasible region of ${W^R}$ defines the range of empathic networks that are compatible with the preference and network node information of the DMs, which ensures the robustness of the results. In some realistic situations, DMs have some basic judgments about network structure and need to get clear-cut results. Therefore, the problem addressed in this subsection is how to select the most representative empathic network from the set of compatible empathic networks $\mathcal{W}$ in combination with the personalized needs of the DMs.

\subsubsection{The most discriminating empathic network}
Consider that the DMs want to get a single empathic network to make the provided decision information as clear as possible. We propose  the following four kinds of interpretation of the preference and network node information in view of obtaining a representative compatible empatic network:
\vspace{0.3cm}

(1)	If all ${d_j}\in D$ firmly express their preference relations between alternatives, such as ${a_s}{ \succ ^{{d_j}}}{a_t}, \ {a_s},{a_t}\in {A^R}$, they hope the difference in empathic utility between alternatives is as large as possible.
\vspace{0.3cm}

(2)	If the intensities of preference relations between alternatives are firmly expressed  by all ${d_j} \in D$, such as $( {{a_s},{a_t}}){ \succ ^{*{d_j}}}({{a_p},{a_q}}), \ {a_s},{a_t},{a_p},{a_q}\in {A^R}$,  they  want the differences $({u_j}({{a_s}}) - {u_j}({{a_t}})) - ({u_j}({{a_p}}) - {u_j}({{a_q}}))$ be as large as possible.
\vspace{0.3cm}

(3)	If the DMs want the intensities of the empathic relationships between nodes, such as ${w_{ij}}>{w_{kh}}, \ i,j,k,h\in {N^R}$, to be amplified, then the weight differences ${w_{ij}}-{w_{kh}}$ should  be as large as possible.
\vspace{0.3cm}

(4)	If the DMs want the intensities of the empathic centrality relationships between nodes, such as ${\omega _i} - {\omega _j} > {\omega _k} - {\omega _h}, \ i,j,k,h\in {N^R}$, to be amplified, then the differences of empatic centralities  $(({\omega _i} - {\omega _j}) - ({\omega _k} - {\omega _h}))$ should be as large as possible.
\vspace{0.3cm}

The modeling of the above four kinds of interpretation of the preference and network node information consists in finding the maximum value of $\varepsilon$ for the following four types of constraints included in set ${W^R}$:
\begin{align*}
\begin{array}{l@{~}r}
{u_j}({{a_s}}) \ge {u_j}({{a_t}}) + \varepsilon , \mbox{ if } {a_s}{ \succ ^{{d_j}}}{a_t}  \hspace{0.1cm} \text { for } {a_s},{a_t}\in {A^R}, \ {d_j} \in D,\\
{u_j}({{a_s}}) - {u_j}({{a_t}}) \ge {u_j}({{a_p}}) - {u_j}({{a_q}}) + \varepsilon , \mbox{ if } ({{a_s},{a_t}}){ \succ ^{*{d_j}}}({{a_p},{a_q}})\\
\hspace{0.1cm} \text { for } {a_s},{a_t},{a_p},{a_q}\in {A^R}, \ {d_j} \in D,\\
{w_{ij}} \ge {w_{kh}} + \varepsilon \hspace{0.1cm}  \text { for } i,j,k,h\in {N^R},\\
{\omega _i} - {\omega _j} \ge {\omega _k} - {\omega _h} + \varepsilon \hspace{0.1cm}  \text { for }  i,j,k,h\in {N^R}.\\
\end{array}
\end{align*}

The goal of getting the most discriminating compatible empatic network can be achieved by solving the following model:

\begin{align*}
\begin{array}{r@{~}l}
\quad &\text{(Model 5)}\hspace{0.1cm}\max \varepsilon \\[10pt]
\quad &\text{\textup{subject to}}
\hspace{0.1cm} {W^R}.\\[10pt]
\end{array}
\end{align*}

The optimal solution of Model 5 emphasizes the firmly expressed relations between the reference alternatives, and between the weights and empathic centralities of some individuals. This  operation can reduce the ambiguity of recommendation of the final ranking.

\subsubsection{The sparse empathic network}
How to train a sparse neural network that performs as well as a dense neural network? This is a hot issue in the field of deep learning. Moving this question to the search of a representative empathic network, we are looking for the sparsest compatible empathic network. The significance lies in the ability of sparse networks to facilitate rapid information circulation, enhance network efficiency, while maintaining robustness.

In order to select the sparsest empathic network from the set of compatible empathic networks $\mathcal{W}$, a new constraint $\varepsilon^\prime {\gamma _{ij}} \le {w _{ij}} \le M{\gamma _{ij}}$ is added, where $\varepsilon^\prime$ is the empathic relationship intensity threshold, $\gamma_{ij}$ is a $\{0,1\}$ variable, and $M$ is a positive large number. When $\gamma _{ij}=0$, then $w_{ij}=0$, which means that ${d_i}$ has no empathic relationship with ${d_j}$, while when $\gamma _{ij}=1$, then $w_{ij}\ge \varepsilon^\prime$, indicating that ${d_i}$ has an empathic relationship with ${d_j}$. Minimizing the sum of all ${\gamma _{ij}}, \ i,j\in N$, as much as possible will result in a sparse configuration of the network. Therefore, in order to obtain a sparse compatible empathic network one has to solve the following Model 6:

\begin{align*}
\begin{array}{r@{~}l}
\quad &\text{(Model 6)}\hspace{0.1cm}\min \sum\limits_i {\sum\limits_j {{\gamma _{ij}}} }  \\[10pt]
\quad &\text{\textup{subject to}}
\begin{cases}
\hspace{0.1cm} {W^R},\\[10pt]
\hspace{0.1cm} \varepsilon^\prime {\gamma_{ij}} \le {w_{ij}} \le M{\gamma_{ij}}, \ \forall i,j\in N,\\[10pt]
\hspace{0.1cm} {\gamma _{ij}} \in \{ 0,1\}, \ \forall i,j\in N,
\end{cases}
\end{array}
\end{align*}
where $\varepsilon^\prime$ is the empathic relationship intensity threshold and $M$ is a positive large number.

\subsubsection{Central and distributed empathic networks}
In a social network, a node possessing significant influence means it is the ``opinion leader" in group decision-making, with other nodes acting as its followers. This phenomenon is commonly observed in real-life scenarios, such as individuals with notable talent in social circles, critical infrastructures within urban networks, or pivotal entities in disease transmission networks. This type of network is generally referred to as a central social network and is characterized by high efficiency and low resilience. Sometimes we need, however, to avoid the situation where the ``opinion leaders'' are attacked and the whole network is brought down. Therefore, the equal status of all nodes in the network may be required, which means that their influence is balanced. Such networks are often referred to as the distributed social networks. Based on this, this subsection introduces two types of empathic networks with different influence characteristics, i.e., the central empathic network and the distributed empathic network.

In order to construct the models permitting to select central and distributed empathic networks from the set of compatible empathic networks $\mathcal{W}$, we first give the definition of entropy based on the empathic centrality. Then, the definitions of central and distributed empathic networks are provided. Finally, the models are formulated according to the goal of maximizing or minimizing the entropy.

\begin{definition}
Given the empathic network $G(N,E)$, the empathic centrality $\omega _j$ of ${d_j}\in D$ is normalized to $\ \omega _j^\prime= \frac{{{\omega _j}}}{n},\  j \in N$. Then,
\begin{align}
-\sum\limits_{j = 1}^n {\omega _j^\prime} \ln \omega _j^\prime,
\end{align}
is the entropy based on the empathic centrality.
\end{definition}

Note that since ${w_{ij}} \ge 0, i \ne j$ and ${w_{jj}} > 0,j = 1,2, \cdots, n$, we have ${\omega _j} = \sum\limits_{i = 1}^n {{w_{ij}}}  \ge {w_{jj}} > 0$  and consequently $\omega _j^\prime = \frac{{{\omega _j}}}{n} > 0$.

\begin{theorem}\label{T1}
When the empathic centrality of all ${d_j}\in D$ in the empathic network is evenly distributed, that is, the status of nodes is equal, the entropy based on the empathic centrality reaches the maximum value. When the empathic centrality of  ${d_k}\in D$ is the largest, that is, the node is the ``opinion leader" of the network, the entropy reaches the minimum value.
\end{theorem}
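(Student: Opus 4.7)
My plan is to identify the quantity in question with the Shannon entropy of a probability distribution and then exploit the concavity of $\varphi(x)=-x\ln x$.

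\textbf{Reduction to Shannon entropy.} I would first verify that $\{\omega_j'\}_{j=1}^{n}$ is a strictly positive probability vector. The row-sum property $\sum_{k=1}^{n} w_{jk}=1$ from $W^{base}$ gives
\begin{equation*}
\sum_{j=1}^{n}\omega_j=\sum_{j=1}^{n}\sum_{k=1}^{n}w_{kj}=\sum_{k=1}^{n}\sum_{j=1}^{n}w_{kj}=n,
\end{equation*}
so $\sum_{j=1}^{n}\omega_j'=1$; strict positivity $\omega_j'>0$ is the remark stated immediately before the theorem. Hence the functional in question is precisely the Shannon entropy $H(\omega')=-\sum_{j=1}^{n}\omega_j'\ln\omega_j'$.

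\textbf{Maximum.} Since $\varphi$ is strictly concave on $(0,1]$, Jensen's inequality gives
\begin{equation*}
\frac{1}{n}\sum_{j=1}^{n}\varphi(\omega_j')\le\varphi\Bigl(\frac{1}{n}\sum_{j=1}^{n}\omega_j'\Bigr)=\varphi(1/n),
\end{equation*}
so $H(\omega')\le n\,\varphi(1/n)=\ln n$, with equality if and only if all $\omega_j'$ coincide, i.e., $\omega_j=1$ for every $j\in N$. This is the evenly distributed case. A Lagrange multiplier argument with the constraint $\sum_j\omega_j'=1$ gives the same conclusion: the stationarity equation $-\ln\omega_j'-1-\lambda=0$ forces $\omega_j'$ to be independent of $j$, and strict concavity certifies this as the unique maximizer.

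\textbf{Minimum.} Since $H$ is strictly concave on the probability simplex, its minimum over any convex subset is attained at an extreme point. The admissible vectors $\omega'$ live in the convex set cut out by $W^{base}$ (in particular $\omega_j'\ge w_{jj}/n\ge\varepsilon'/n$), whose extreme points correspond to the most ``peaked'' admissible distributions: one coordinate $\omega_k'$ is pushed as high as the constraints allow and the rest sit at their lower bounds. This is exactly the ``opinion leader'' configuration of the statement. Equivalently, I would run a direct majorization/mass-transfer argument: moving probability mass from a smaller $\omega_j'$ to a larger $\omega_k'$ strictly decreases $\sum_j\varphi(\omega_j')$, so $H$ is minimized at the maximally unbalanced admissible vector.

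\textbf{Main obstacle.} The maximum part is a textbook application of Jensen's inequality. The delicate step is the minimum, because the statement as written is informal: over the full open simplex the infimum of $H$ is $0$ and is not attained, so one must tie the minimization to the admissible set induced by $W^{base}$ and justify extremality of the ``opinion leader'' configuration via concavity or a majorization argument rather than calculus. I expect the write-up to hinge on making that admissible set and that reasoning precise.
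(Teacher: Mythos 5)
Your proposal is correct and follows essentially the same route as the paper's Appendix~A proof: the maximum is obtained from the concavity of $-x\ln x$ (the paper via a Lagrange multiplier stationarity argument, which you also mention, yielding $\omega_j'=1/n$ and value $\ln n$), and the minimum from the fact that a concave objective attains its minimum on the boundary of the simplex, at the fully peaked ``opinion leader'' configuration. If anything, your treatment of the minimum is slightly more careful than the paper's, which evaluates the entropy at $\omega_k'=1,\ \omega_j'=0$ $(j\neq k)$ even though the standing assumption $\omega_j'>0$ excludes that point, whereas you explicitly note that the infimum over the open simplex is not attained and must be referred to the admissible set cut out by $W^{base}$.
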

The proof is given in Appendix A.

\begin{definition}\label{dy1}
If the empathic centrality of ${d_j}\in D$ in the empathic network is large enough, that is, ${\omega _j}\ge\frac{n}{2}, \ j\in N$, is satisfied, then the empathic network is called the central empathic network.
\end{definition}

\begin{definition}\label{dy2}
If there exists $\delta> 0$ such that the empathic centrality $\omega _j$ of all ${d_j}\in D$ in the empathic network satisfies $|{\omega_j}-1| \le \delta,\ j \in N$, then such an empathic network is called the distributed empathic network.
\end{definition}

It is worth noting that the condition $|{\omega _j} - 1| \le \delta, \ j \in N$, in Definition~\ref{dy2} means that the empathic centrality $\omega_j$ of ${d_j}\in D$ in the network is as close as possible to their mean value 1. According to Theorem~\ref{T1}, all nodes in the network have equal status, thus it is the distributed empathic network.

In the following, we formulate the two models for finding the central empathic network and the distributed empathic network in the set of compatible empathic networks $\mathcal{W}$:

\begin{align*}
\begin{array}{r@{~}l}
\quad &\text{(Model 7)}\hspace{0.1cm}\min \ - \sum\limits_{j = 1}^n {\omega _j^\prime} \ln \omega _j^\prime  \\[10pt]
\quad &\text{\textup{subject to}}
\begin{cases}
\hspace{0.1cm} {W^R},\\[10pt]
\hspace{0.1cm} \omega _j^\prime = \frac{{{\omega _j}}}{n}, \ \forall j\in N.
\end{cases}
\end{array}
\end{align*}

\begin{align*}
\begin{array}{r@{~}l}
\quad &\text{(Model 8)}\hspace{0.1cm}\max \ - \sum\limits_{j = 1}^n {\omega _j^\prime} \ln \omega _j^\prime  \\[10pt]
\quad &\text{\textup{subject to}}
\begin{cases}
\hspace{0.1cm} {W^R},\\[10pt]
\hspace{0.1cm} \omega _j^\prime = \frac{{{\omega _j}}}{n},\  \forall j\in N.
\end{cases}
\end{array}
\end{align*}

Let $g(\omega _j^\prime) =  - \omega _j^\prime\ln \omega _j^\prime$. It has been confirmed in the proof of Theorem~\ref{T1} that $g(\omega _j^\prime)$ is a concave function on the domain $\omega _j^\prime \in [0,1]$, so $\sum\limits_{j = 1}^n {\omega _j^\prime} \ln \omega _j^\prime$ is a convex function on $\omega _j^\prime \in [0,1]$. The constraints of Model 7 and Model 8 are linear constraints. Therefore, Model 7 and Model 8 are nonlinear programming problems whose objective functions are to find the maximum and the minimum value of $\sum\limits_{j = 1}^n {\omega _j^\prime} \ln \omega _j^\prime$, subject to the linear constraints, which can be solved by Lingo 18.0.

\subsubsection{The resilient empathic network}
Life is often marked by various unpredictable disturbances, ranging from minor occurrences like a cold or fever to more significant challenges such as infrastructure paralysis or even existential threats. Given the negative effects of disruptions, there is an increasing focus on system resilience. Resilience \cite{14C.S.Holling1973} is widely used in ecology, finance, transportation and other fields, and can be understood as the response of a system in face of disturbance events, or the ability of a system to adjust its behavioral activities to maintain basic functions in case of disturbances. Considering that expert withdrawal is likely to occur in the decision-making process of emergency management, in this subsection, we study local and global empathic networks with high resilience.

The usual approach to improving network resilience involves two aspects. The first is to reduce the importance of key nodes and make the organization more flat. The second is to increase redundancy while keeping the functionally identical network nodes and links  \cite{15R.S.Burt1982}. In order to make a quantitative analysis, we introduce the directed network density $\rho$, and then give a definition of the high-resilient empathic network.

\begin{definition}
For an empathic network containing $N$ nodes, the density $\rho$ is defined as the ratio of the actual number of edges $N^\prime$ (excluding self-loops) in the network to the maximum possible number of edges, that is, $\rho =\frac{N^\prime}{{N(N - 1)}}$.
\end{definition}

\begin{definition}\label{d}
If the empathic network density $\rho \ge {\rho _0}$ (the threshold given in advance) and there is $\delta  > 0$, so that the empathic centrality of each node in the empathic network satisfy $|{\omega _j} - 1| \le \delta, \ \forall j\in N$, then the empathic network is a high-resilient empathic network.
\end{definition}

In order to select the local empathic network with high resilience from the set of compatible empathic networks $\mathcal{W}$, we formulate Model 9, whose role is to make the empathic centrality of each node as equal as possible under the condition that there are as many links in the network as possible. Model 9 can be written as a convex programming problem in which the objective function is to minimize the convex function $\sum\limits_{j = 1}^n {\omega _j^\prime } \ln \omega _j^\prime$ subject to linear constraints. One can use again Lingo 18.0 to solve it.

\begin{align*}
\begin{array}{r@{~}l}
\quad &\text{(Model 9)}\hspace{0.1cm}\max \ - \sum\limits_{j = 1}^n {\omega _j^\prime } \ln \omega _j^\prime  \\[10pt]
\quad &\text{\textup{subject to}}
\begin{cases}
\hspace{0.1cm} {W^{{A^R}}},\\[10pt]
\hspace{0.1cm} {W^{{N^R}}},\\[10pt]
\hspace{0.1cm} \sum\limits_{k = 1}^{n} {{w_{jk}} = 1}, \ \forall j\in N,\\[10pt]
\hspace{0.1cm} {w_{ij}} \ge {\varepsilon^\prime}, \ \forall i,j\in N,\\[10pt]
\hspace{0.1cm} \omega _j^\prime  = \frac{{{\omega _j}}}{n}, \ \forall j\in N.\\[10pt]
\end{cases}
\end{array}
\end{align*}

Given the transitive nature of empathic relationships, our consideration shifts towards ensuring that the global empathic network maintains high resilience after the transference of these relationships.

\begin{definition}\cite{36R.S.Varga1962}\label{d1}
Given $A \in {C^{n \times n}}, \ n \ge 2$. If there exists a permutation matrix $P$ such that

\begin{equation*}
PA{P^T}=\begin{pmatrix}
A_{11} & A_{12} \\
0& A_{22}\\
\end{pmatrix}, \ {A_{11}} \in {C^{r \times r}}, \ {A_{22}} \in {C^{(n - r) \times (n - r)}}
\end{equation*}
where $1 \le r \le n$, then $A$ is reducible, otherwise $A$ is irreducible.
\end{definition}

\begin{theorem}\label{T2}
The local empathic weight matrix $W$ is irreducible if and only if the global empathic weight matrix $G>0$.

\end{theorem}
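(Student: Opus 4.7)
My plan is to reduce the claim to an analysis of the matrix $M := I - W + D$, because $G = M^{-1} D$. Since $D$ is diagonal with strictly positive entries $w_{jj}$, right-multiplication by $D$ only rescales columns by positive scalars, so $G > 0$ entrywise if and only if $M^{-1} > 0$ entrywise. The task therefore reduces to proving that $M^{-1} > 0$ iff $W$ is irreducible.

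Next I write $M = I - N$ with $N := W - D$. Since the rows of $W$ sum to $1$ and $w_{jj} > 0$, the matrix $N$ is nonnegative, has zero diagonal, and each of its rows sums to $1 - w_{jj} \le 1 - \min_j w_{jj} < 1$. Hence $\rho(N) < 1$ and the Neumann series converges:
\begin{equation*}
M^{-1} = (I - N)^{-1} = \sum_{k=0}^{\infty} N^{k}.
\end{equation*}
All terms are nonnegative; the $k = 0$ term gives $(M^{-1})_{ii} \ge 1$, so the diagonal of $M^{-1}$ is automatically positive. For $i \ne j$, the entry $(M^{-1})_{ij}$ is strictly positive iff $(N^{k})_{ij} > 0$ for some $k \ge 1$, i.e.\ iff there is a directed walk $i = i_0 \to i_1 \to \cdots \to i_k = j$ in which consecutive indices differ and each step uses a strictly positive off-diagonal entry of $W$.

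For the forward direction, assume $W$ is irreducible in the sense of Definition~\ref{d1}. The standard characterization of irreducibility for a nonnegative matrix then gives, for every $i \ne j$, some $k \ge 1$ with $(W^{k})_{ij} > 0$. Expanding this power as a sum over walks and contracting any consecutive repeated indices (self-loop steps) produces a walk of positive length from $i$ to $j$ using only off-diagonal entries of $W$, i.e.\ entries of $N$. That walk contributes a positive summand to some $(N^{l})_{ij}$, so $(M^{-1})_{ij} > 0$, and hence $G_{ij} > 0$. For the reverse direction I argue by contrapositive: if $W$ is reducible, pick a permutation $P$ with $P W P^{T}$ block upper-triangular. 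Because $P D P^{T}$ is again diagonal, $P N P^{T} = P W P^{T} - P D P^{T}$ has the same block upper-triangular structure; then each $N^{k}$, the Neumann sum $M^{-1}$, and finally $G = M^{-1} D$ inherit this structure, forcing the entire lower-left block of $G$ to vanish and contradicting $G > 0$.

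The main obstacle is making the forward direction airtight, because it requires turning the purely algebraic, permutation-based definition of irreducibility (Definition~\ref{d1}) into the combinatorial statement that some $(W^{k})_{ij}$ is positive. I would handle this by citing the standard equivalence, for a nonnegative matrix, between irreducibility and strong connectivity of the associated directed graph (or, equivalently, the classical identity that $W \ge 0$ is irreducible iff $(I + W)^{n-1} > 0$); either formulation immediately yields the required walk. The remaining ingredients, namely the M-matrix / Neumann expansion of $M^{-1}$ and the block-triangular propagation used in the reverse implication, are routine.
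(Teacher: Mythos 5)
Your proof is correct and follows essentially the same route as the paper's: both reduce $G>0$ to $(I-W+D)^{-1}>0$ using the positivity of the diagonal matrix $D$, set $B=W-D$ with $\rho(B)<1$, and exploit the equivalence, for a nonnegative matrix of spectral radius less than one, between irreducibility and entrywise positivity of $(I-B)^{-1}$. The only difference is presentational: where the paper delegates the spectral-radius bound, the positivity equivalence, and the transfer of irreducibility between $W$ and $W-D$ to Lemmas~\ref{l1}--\ref{l4}, you prove these steps inline via the strict row-sum bound $1-w_{jj}<1$, the Neumann series with its directed-walk interpretation (contracting self-loop steps to pass from walks in $W$ to walks in $W-D$), and block-triangular propagation in the contrapositive of the reverse direction.
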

The proof is given in Appendix B.

The global empathic weight matrix $G>0$ means that $G$ is a positive matrix with any of its entries being positive. This also implies that there are bidirectional links between any two nodes in the empathic network, so the global empathic network is a directed complete graph\footnote{In a directed graph, if there are two arcs with opposite directions between any two nodes, the graph is a directed complete graph.}.

\begin{corollary}\label{C1}
Local empathic weight matrix $W$ corresponding empathic network has a loop, it passes at least once through each node if and only if the global empathic weight matrix $G>0$.
\end{corollary}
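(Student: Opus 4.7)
The plan is to chain together Theorem~\ref{T2} with the classical Perron--Frobenius characterization of irreducibility through strong connectivity of the associated digraph. By Theorem~\ref{T2}, $G>0$ holds if and only if the local empathic weight matrix $W$ is irreducible. So the corollary will follow once I show that $W$ is irreducible if and only if the digraph associated with $W$ admits a closed walk that visits every node at least once.

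First I would recall the standard fact (a direct consequence of Definition~\ref{d1} applied to the nonnegative matrix $W\ge 0$): $W$ is irreducible if and only if its associated directed graph $\Gamma(W)$ (with an arc from $i$ to $j$ whenever $w_{ij}>\varepsilon^\prime>0$, and also self-loops at each $j$ since $w_{jj}\ge\varepsilon^\prime$) is strongly connected, i.e.\ for every ordered pair $(i,j)$ there is a directed path in $\Gamma(W)$ from $i$ to $j$. This is the same argument used in the proof of Theorem~\ref{T2}, so it is legitimate to invoke it here.

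Next I would establish the equivalence between strong connectivity of $\Gamma(W)$ and the existence of a closed walk through every node. For the \emph{only if} direction, assume $\Gamma(W)$ is strongly connected and fix an arbitrary labelling $v_1,v_2,\ldots,v_n$ of the nodes. Strong connectivity gives, for each $k\in\{1,\ldots,n\}$, a directed path $P_k$ from $v_k$ to $v_{k+1}$ (with $v_{n+1}:=v_1$). Concatenating $P_1,P_2,\ldots,P_n$ yields a closed walk that visits every node at least once. For the \emph{if} direction, suppose there is a closed walk $L$ in $\Gamma(W)$ that passes through every node at least once. Then, for any two nodes $i,j\in N$, picking an occurrence of $i$ in $L$ and following $L$ until the next occurrence of $j$ produces a directed path from $i$ to $j$, so $\Gamma(W)$ is strongly connected.

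Combining the two equivalences gives: $W$ corresponds to a digraph with a loop visiting every node $\iff W$ is irreducible $\iff G>0$, which is exactly the claim. I do not anticipate a serious obstacle: the only delicate point is the interpretation of ``loop'' as a closed walk (rather than a simple Hamiltonian cycle, which would be strictly stronger), but the phrase ``passes at least once through each node'' makes the intended reading clear, and the self-loops $w_{jj}\ge\varepsilon^\prime$ guaranteed by $W^{base}$ guarantee that revisits of a node along the walk pose no issue.
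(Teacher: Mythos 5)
Your proposal is correct and follows essentially the same route as the paper: Theorem~\ref{T2} combined with the chain ``$W$ irreducible $\Leftrightarrow$ digraph strongly connected $\Leftrightarrow$ closed walk through every node,'' which the paper packages as Lemmas~\ref{l5}--\ref{l7} (citing the graph-theoretic equivalence rather than proving it, as you do by concatenating paths and by extracting an $i$-to-$j$ segment of the loop). The only cosmetic difference is your handling of the self-loops, which the paper also addresses with a one-line remark.
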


\begin{corollary}\label{C2}
If local empathic weight matrix $W$ has the form:

\vspace{0.3cm}
\begin{equation*}
\begin{pmatrix}
w_{11} & w_{12}&  & &  \\
 & w_{22} & w_{23}& & \\
  & &\ddots &\ddots &  \\
 & & &w_{n-1,n-1}&w_{n-1,n}\\
 w_{n1}& & & &w_{n,n}\\
\end{pmatrix} or
\begin{pmatrix}
 w_{11}& & & &w_{1,n} \\
 w_{21}& w_{22} & & & \\
 & \ddots &\ddots &  & \\
 & &w_{n-1,n-2}&w_{n-1,n-1}& \\
 & & &w_{n,n-1}&w_{n,n}\\
\end{pmatrix},
\end{equation*}
\vspace{0.3cm}

\noindent where the blank position means that any non-negative real number can be there, then the global empathic weight matrix $G>0$.
\end{corollary}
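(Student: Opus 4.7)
The plan is to derive Corollary~\ref{C2} directly from Corollary~\ref{C1} by exhibiting, for each of the two matrix shapes, an explicit directed loop in the empathic network that visits every node at least once. Since Corollary~\ref{C1} characterizes $G>0$ by the existence of such a loop, the conclusion will follow immediately; no new spectral or matrix-analytic work is required.

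First I would read off the support of the first matrix template. The positions where entries are prescribed to be the $w_{ij}$ (as opposed to blank, which is free, and diagonal, which is strictly positive by the base constraints $w_{jj}\ge\varepsilon'$) are $(1,2),(2,3),\ldots,(n-1,n)$ on the superdiagonal together with the lower-left corner $(n,1)$. In the directed empathic network $G(N,E)$, these arcs are exactly $1\to 2,\ 2\to 3,\ \ldots,\ (n-1)\to n,\ n\to 1$, which form a Hamiltonian cycle traversing every node exactly once. By Corollary~\ref{C1}, the existence of such a loop passing through every node is equivalent to $G>0$, so the first case is done. The blank positions in the template are irrelevant to this argument: they may or may not carry positive weight, and the cycle we exhibited already guarantees strong connectivity regardless.

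The second matrix template is treated symmetrically. Its prescribed off-diagonal support is $(2,1),(3,2),\ldots,(n,n-1)$ on the subdiagonal together with the upper-right corner $(1,n)$. The corresponding arcs in $G(N,E)$ are $2\to 1,\ 3\to 2,\ \ldots,\ n\to(n-1)$, and $1\to n$, which can be rearranged into the Hamiltonian cycle $1\to n\to (n-1)\to\cdots\to 2\to 1$. Again by Corollary~\ref{C1}, this forces $G>0$.

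The only delicate point, and what I expect to be the main obstacle, is making sure that the prescribed arcs actually carry strictly positive weight rather than possibly being zero. This is handled by the row-stochastic constraint $w_{jj}+\sum_{k\ne j}w_{jk}=1$ from $W^{base}$ combined with the fact that, in each row of the two templates, only the displayed entries and the blank entries are available; hence the displayed off-diagonal entry in each row must be strictly positive whenever the row sum would otherwise fail. If one prefers to avoid this row-by-row argument, the statement can simply be interpreted as "the displayed entries are strictly positive while the blank entries are arbitrary nonnegative reals," which is the natural reading of such circulant-style templates. Under either interpretation, the Hamiltonian cycle is present in $G(N,E)$, and Corollary~\ref{C1} delivers $G>0$.
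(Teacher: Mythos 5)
Your proof is correct and takes essentially the same route as the paper's: both reduce the claim to Corollary~1 by observing that the displayed entries (superdiagonal plus the $(n,1)$ corner, or subdiagonal plus the $(1,n)$ corner) trace out a directed loop through every node, so $G>0$ follows. Your reading that the displayed entries are meant to be strictly positive is the intended one (it matches the constraints $w_{k,k+1}\ge\varepsilon'$ and $w_{n1}\ge\varepsilon'$ in Model~10), and your explicit identification of the Hamiltonian cycles is in fact more careful than the paper's own two-line argument.
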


From the above theorems and corollaries, we know when the network corresponding to local empathic matrix $W$ is irreducible, and the global empathic network obtained after the transference of empathic relationships has a high network density. In order to form a highly resilient global empathic network, we also need to further consider the altruism of the empathic relationship, that is, DMs should have an open attitude and pay more attention to the utility or preference of other adjacent nodes. So it is also necessary to ensure that the self-empathic weight $w_{jj}$ of each DM is small enough. Therefore, we can build Model 10 and Model 11 to select the local empathic network from the set of compatible empathic networks $\mathcal{W}$.

\begin{align*}
\begin{array}{r@{~}l}
\quad &\text{(Model 10)}\hspace{0.1cm}\max \ {\varepsilon}  \\[10pt]
\quad &\text{\textup{subject to}}
\begin{cases}
\hspace{0.1cm} {W^{{A^R}}},\\[10pt]
\hspace{0.1cm} {W^{{N^R}}},\\[10pt]
\hspace{0.1cm} \sum\limits_{k = 1}^{n} {{w_{jk}} = 1}, \ \forall j\in N,\\[10pt]
\hspace{0.1cm} {w_{ij}} \ge {\varepsilon^\prime}, \ \ i \ne j,i,j \in N,\\[10pt]
\hspace{0.1cm} {w_{k,k + 1}} \ge {\varepsilon^\prime} \hspace{0.1cm} \text { for } k = 1,2, \cdots ,n - 1,(10-1)\\[10pt]
\hspace{0.1cm} {w_{n1}} \ge {\varepsilon^\prime},(10-2)\\[10pt]
\hspace{0.1cm} {w_{jj}} = {\varepsilon^\prime},j \in N.(10-3)
\end{cases}
\end{array}
\end{align*}

\begin{align*}
\begin{array}{r@{~}l}
\quad &\text{(Model 11)}\hspace{0.1cm}\max \ {\varepsilon} \\[10pt]
\quad &\text{\textup{subject to}}
\begin{cases}
\hspace{0.1cm} {W^{{A^R}}},\\[10pt]
\hspace{0.1cm} {W^{{N^R}}},\\[10pt]
\hspace{0.1cm} \sum\limits_{k = 1}^{n} {{w_{jk}} = 1}, \ \forall j\in N,\\[10pt]
\hspace{0.1cm} {w_{ij}} \ge {\varepsilon^\prime}, \ \ i \ne j,i,j \in N,\\[10pt]
\hspace{0.1cm} {w_{k,k- 1}} \ge {\varepsilon^\prime} \hspace{0.1cm} \text { for } k = 2,3, \cdots ,n,\\[10pt]
\hspace{0.1cm} {w_{1n}} \ge {\varepsilon^\prime},\\[10pt]
\hspace{0.1cm} {w_{jj}} = {\varepsilon^\prime},j \in N
\end{cases}
\end{array}
\end{align*}

In Model 10, constraints (10-1) and (10-2) are to ensure that there is a loop in the network corresponding to local empathic weight matrix $W$, and constraint (10-3) is to ensure that the self-empathic weights of the nodes are low enough to pay attention to the preferences of other nodes. The corresponding constraints in Model 11 have the same meaning as in Model 10.

\subsubsection{Empathic networks with different topologies}
In recent years, social networks have attracted increasing attention in the fields of psychology, neuroscience and computing. Related studies have found that the topological structure of social networks shapes collective cognition and behavior \cite{37A.Coman2016,38I.Momennejad2019,39D.Centola2010,40I.Momennejad2022}. Human cognition is not isolated, but is formed in social networks through learning and memory. Unlike animals, human social networks have different topologies and serve different environments. Humans transmit and share information through social networks to synchronize collective cognition and memory and integrate different information and knowledge. Therefore, in this subsection, we consider the influence of different empathic network topologies on collective cognition, and we construct Models 12-14 to select compatible empathic networks that can not only reproduce the decision information provided by the DMs, but also meet the requirements of different network topologies.

In emergency decision-making, a star topology can quickly centralize information, aiding in rapid response. A bus topology can simplify the pathways for information flow, reducing delays and thereby increasing efficiency. A tree topology can provide experts with a clear framework for information circulation, facilitating the swift conveyance of commands and feedback. Let's consider star, bus, and tree network topologies. To represent them, we will add specific constraints to $W^{R}$ in the new models.

\begin{figure}[!h]
\centering
\subfigure[The star topology]{
\resizebox*{3cm}{!}{\includegraphics{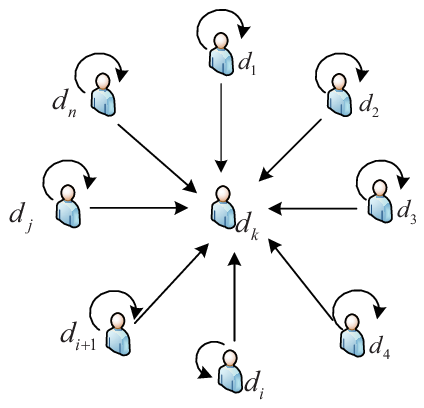}}\label{2-1}}
\subfigure[The bus topology]{
\hspace{0.2cm}\resizebox*{4cm}{!}{\includegraphics{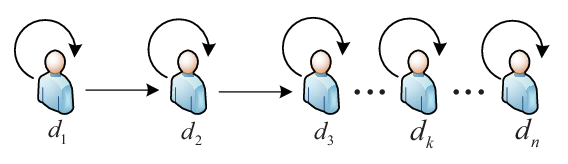}}\label{2-2}}
\subfigure[The tree topology]{
\hspace{0.2cm}\resizebox*{4cm}{!}{\includegraphics{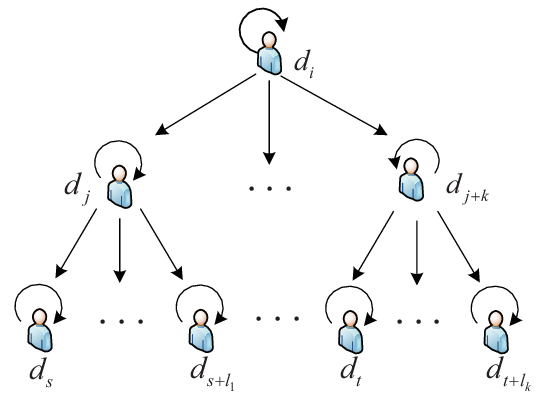}}\label{2-3}}
\caption{Three topologies of empathic networks.}
\end{figure}

\begin{itemize}
\item A star topology is a center with multiple nodes. Each sub-node has an empathic relationship with the central node, as shown in Fig.~\ref{2-1}. Model 12 permits to select the compatible empathic network with star topology.
\begin{align*}
\begin{array}{r@{~}l}
\quad &\text{(Model 12)}\hspace{0.1cm}\max\  {\varepsilon} \\[10pt]
\quad &\text{\textup{subject to}}
\begin{cases}
\hspace{0.1cm} {W^{^R}},\\[10pt]
\hspace{0.1cm} {w_{ik}} \ge {\varepsilon ^\prime} \hspace{0.1cm} \text { for } i, k \in N \hspace{0.1cm} \mbox{and} \hspace{0.1cm} i \ne k, \\[10pt]
\hspace{0.1cm} {w_{ij}}= 0 \hspace{0.1cm} \text { for } i, j \in N \hspace{0.1cm} \mbox{and} \hspace{0.1cm} i \ne j.
\end{cases}
\end{array}
\end{align*}
\end{itemize}

\begin{itemize}
\item A bus topology means that all nodes are linked through a single bus. As shown in Fig.~\ref{2-2}, nodes $1,2, \cdots ,n$ have the empathic relationships in turn.
\begin{align*}
\begin{array}{r@{~}l}
\quad &\text{(Model 13)}\hspace{0.1cm}\max \ {\varepsilon} \\[10pt]
\quad &\text{\textup{subject to}}
\begin{cases}
\hspace{0.1cm} {W^{^R}},\\[10pt]
\hspace{0.1cm} {w_{k,k+1}} \ge {\varepsilon ^\prime} \hspace{0.1cm} \text { for } k = 1,2, \cdots ,n - 1, \\[10pt]
\hspace{0.1cm} {w_{ij}}= 0 \hspace{0.1cm} \text { for } i, j \in N \hspace{0.1cm} \mbox{and} \hspace{0.1cm} i \ne j,i \ne j - 1.
\end{cases}
\end{array}
\end{align*}

In Model 13, constraints ${w_{k,k+1}} \ge {\varepsilon ^\prime}, \ k = 1,2, \cdots ,n - 1$, can be written to ${w_{k+1,k}} \ge {\varepsilon ^\prime}, \ k = 1,2, \cdots ,n - 1,$ representing the opposite bus links to Fig.~\ref{2-2}.
\end{itemize}

\begin{itemize}
\item The tree topology evolves from the bus topology and is shaped like an inverted tree with a root at the top and branches below the root, and each branch can be followed by sub-branches. Fig.~\ref{2-3} shows the empathic network with a three-layer tree structure. Model 14 permits to find a compatible empathic network with a tree topology.
\begin{align*}
\begin{array}{r@{~}l}
\quad &\text{(Model 14)}\hspace{0.1cm}\max \ {\varepsilon} \\[10pt]
\quad &\text{\textup{subject to}}
\begin{cases}
\hspace{0.1cm} {W^{^R}},\\[10pt]
\hspace{0.1cm} {w_{ij}} + {w_{i,j+1}} +\cdots + {w_{i,j+k}} = 1 \hspace{0.1cm} \text { for } i \ne j, \ k \in N,\\[10pt]
\hspace{0.1cm} {w_{ij}}, {w_{i,j+1}}, \cdots, {w_{i,j+k}}\ge {\varepsilon ^\prime}\hspace{0.1cm} \text { for } i \ne j, \ k \in N,\\[10pt]
\hspace{0.1cm} {w_{js}} + {w_{j,s+1}} +\cdots + {w_{j,s+l_{1}}} = 1 \hspace{0.1cm} \text { for } j \ne s, \ l_{1} \in N,\\[10pt]
\hspace{0.1cm} {w_{js}}, {w_{j,s+1}}, \cdots, {w_{j,s+l_{1}}}\ge {\varepsilon ^\prime} \hspace{0.1cm} \text { for } j \ne s, \ l_{1} \in N,\\[10pt]
\hspace{0.1cm} \cdots\\
\hspace{0.1cm} {w_{j+k,t}} + {w_{j+k,t+1}} +  \cdots  + {w_{j+k,t+l_{k}}} = 1 \hspace{0.1cm} \text { for } t\ne j+k, \ l_{k} \in N,\\[10pt]
\hspace{0.1cm} {w_{j+k,t}}, {w_{j+k,t+1}}, \cdots, {w_{j+k,t+l_{k}}}\ge {\varepsilon ^\prime}\hspace{0.1cm} \text { for } t\ne j+k, \ l_{k} \in N.
\end{cases}
\end{array}
\end{align*}

\end{itemize}

\section{Numerical example}
Suppose, during an emergency rescue operation in a specific location, the government invites ten experts $D=\{d_1, d_2, d_3,\cdots, d_{10}\}$ from different fields to collaborate on the construction of emergency shelters with the aim of providing suitable settlements for the victims. It is known that there are five kinds of building materials for experts to choose: lumber ($a_1$), bamboo ($a_2$), stone ($a_3$), cement ($a_4$), and concrete ($a_5$). In order to ensure the proper course of the decision-making process, the competent decision-making body should not only take into account the decision information provided by the experts, but also pay attention to the influence of existing empathic relationships between experts on decision-making results. Therefore, it is necessary to infer the empathic network among the experts to provide a credible reference for subsequent decision analysis. Then, the intrinsic utilities of experts are the calculated by the robust ordinal regression approach in subsection~\ref{intrinsic utilities}. The binary empathic relationships among experts are calculated in subsection~\ref{eryuan} using the empathic network learning method, and the results of selecting representative empathic networks from a set of compatible empathic networks under different requirements are shown in subsection~\ref{daibiao}. The decision results under different network structures are presented in subsection~\ref{decision}.

\subsection{Calculation of the intrinsic utilities}\label{intrinsic utilities}
In this subsection, we assume that experts can only provide incomplete fuzzy judgment matrices, so we cannot directly capture intrinsic utilities. Firstly, according to experts' own experience and knowledge, each of them gives a incomplete judgment matrix of the alternative materials, as follows:

{\tiny
\begin{equation*}
{R_1}=\begin{pmatrix}
0.5&0.8&*&*&*\\
0.2&0.5&*&*&*\\
*&*&0.5&0.7&0.4\\
*&*&0.3&0.5&0.2\\
*&*&0.6&0.8&0.5\\
\end{pmatrix};
{R_2}=\begin{pmatrix}
0.5&0.4&*&*&*\\
0.6&0.5&*&*&*\\
*&*&0.5&0.2&0.3\\
*&*&0.8&0.5&0.6\\
*&*&0.7&0.4&0.5\\
\end{pmatrix};
{R_3}=\begin{pmatrix}
0.5&0.3&0.6&*&*\\
0.7&0.5&0.8&*&*\\
0.4&0.2&0.5&*&*\\
*&*&*&0.5&0.2\\
*&*&*&0.8&0.5\\
\end{pmatrix};
{R_4}=\begin{pmatrix}
0.5&0.7&0.6&0.9&0.6\\
0.3&0.5&0.4&*&*\\
0.4&0.6&0.5&*&*\\
0.1&*&*&0.5&0.2\\
0.4&*&*&0.8&0.5\\
\end{pmatrix};
{R_5}=\begin{pmatrix}
0.5&0.1&*&*&*\\
0.9&0.5&*&*&*\\
*&*&0.5&0.6&0.8\\
*&*&0.4&0.5&0.7\\
*&*&0.2&0.3&0.5\\
\end{pmatrix};
\end{equation*}
\begin{equation*}
{R_6}=\begin{pmatrix}
0.5&0.2&*&*&0.3\\
0.8&0.5&*&*&0.6\\
*&*&0.5&0.7&0.5\\
*&*&0.3&0.5&0.3\\
0.7&0.4&0.6&0.8&0.5\\
\end{pmatrix};
{R_7}=\begin{pmatrix}
0.5&0.8&0.6&*&*\\
0.2&0.5&0.3&*&*\\
0.4&0.7&0.5&*&*\\
*&*&*&0.5&0.7\\
*&*&*&0.3&0.5\\
\end{pmatrix};
{R_8}=\begin{pmatrix}
0.5&0.4&*&*&0.6\\
0.6&0.5&*&*&0.7\\
*&*&0.5&0.8&0.5\\
*&*&0.2&0.5&0.2\\
0.4&0.3&0.5&0.8&0.5\\
\end{pmatrix};
{R_9}=\begin{pmatrix}
0.5&0.4&0.5&*&*\\
0.6&0.5&0.6&*&*\\
0.5&0.4&0.5&*&*\\
*&*&*&0.5&0.3\\
*&*&*&0.7&0.5\\
\end{pmatrix};
{R_{10}}=\begin{pmatrix}
0.5&0.7&*&*&*\\
0.3&0.5&*&*&*\\
*&*&0.5&0.8&0.3\\
*&*&0.2&0.5&0\\
*&*&0.7&1&0.5\\
\end{pmatrix}.
\end{equation*}}

In addition, experts provide some indirect preference and preference intensity information, as follows:
\vspace{0.3cm}

${a_1}\succeq^{1}{a_3}$,\ ${a_2}\succeq^{1}{a_4}$, \ $({a_1},{a_4}) \succeq^{*1}({a_1},{a_3})$;
${a_2}\succeq^{2}{a_3}$,\ ${a_2}\succeq^{2}{a_5}$, \ $({a_1},{a_5}) \succeq^{*2}({a_1},{a_4})$;\\

${a_3}\succeq^{3}{a_4}$, ${a_5}\succeq^{3}{a_3}$, \ $({a_1},{a_4}) \succeq^{*3}({a_2},{a_5})$;
${a_3}\succeq^{4}{a_4}$, \ $({a_2},{a_4}) \succeq^{*4}({a_2},{a_5})$;\\

${a_2}\succeq^{5}{a_3}$,\ ${a_2}\succeq^{5}{a_4}$,\ $({a_2},{a_5}) \succeq^{*5}({a_1},{a_3})$;
${a_2}\succeq^{6}{a_4}$,\ $({a_2},{a_3}) \succeq^{*6}({a_1},{a_4})$;\\

${a_4}\succeq^{7}{a_3}$,\ $({a_2},{a_5}) \succeq^{*7}({a_2},{a_4})$;
${a_1}\succeq^{8}{a_3}$,\ $({a_2},{a_4}) \succeq^{*8}({a_1},{a_4})$;\\

${a_1}\succeq^{9}{a_4}$, \ ${a_5}\succeq^{9}{a_3}$,\ $({a_2},{a_4}) \succeq^{*9}({a_1},{a_5})$;
${a_1}\succeq^{10}{a_3}$, \ ${a_2}\succeq^{10}{a_4}$,\ $({a_1},{a_4}) \succeq^{*10}({a_1},{a_3})$.

\vspace{0.3cm}
According to the above incomplete preference information provided by the experts, their sets of reference alternatives are: $A_1^R=A_6^R=A_8^R=A_{10}^R=\{{a_1},{a_2}, {a_3},{a_4}\}$, $A_2^R=A_3^R=A_5^R=A_9^R=\{{a_1},{a_2}, {a_3}, {a_4},{a_5}\}$, $A_4^R=A_7^R=\{{a_2},{a_3}, {a_4}, {a_5}\}$. Based on the above indirect preference information, matrices $R_1-R_5$ are completed using Model 1 as follows:
{\tiny
\begin{equation*}
{R_1}=\begin{pmatrix}
0.5&0.8&0.8&1&0.7\\
0.2&0.5&0.5&0.7&0.4\\
0.2&0.5&0.5&0.7&0.4\\
0&0.3&0.3&0.5&0.2\\
0.3&0.6&0.6&0.8&0.5\\
\end{pmatrix};
{R_2}=\begin{pmatrix}
0.5&0.4&0.9&0.6&0.7\\
0.6&0.5&1&0.7&0.8\\
0.1&0&0.5&0.2&0.3\\
0.4&0.3&0.8&0.5&0.6\\
0.3&0.2&0.7&0.4&0.5\\
\end{pmatrix};
{R_3}=\begin{pmatrix}
0.5&0.3&0.6&0.7&0.4\\
0.7&0.5&0.8&0.9&0.6\\
0.4&0.2&0.5&0.6&0.3\\
0.3&0.1&0.4&0.5&0.2\\
0.6&0.4&0.7&0.8&0.5\\
\end{pmatrix};
{R_4}=\begin{pmatrix}
0.5&0.7&0.6&0.9&0.6\\
0.3&0.5&0.4&0.7&0.4\\
0.4&0.6&0.5&0.8&0.5\\
0.1&0.3&0.2&0.5&0.2\\
0.4&0.6&0.5&0.8&0.5\\
\end{pmatrix};
{R_5}=\begin{pmatrix}
0.5&0.1&0.3&0.4&0.6\\
0.9&0.5&0.7&0.8&1\\
0.7&0.3&0.5&0.6&0.8\\
0.6&0.2&0.4&0.5&0.7\\
0.4&0&0.2&0.3&0.5\\
\end{pmatrix};
\end{equation*}
\begin{equation*}
{R_6}=\begin{pmatrix}
0.5&0.2&0.3&0.5&0.3\\
0.8&0.5&0.6&0.8&0.6\\
0.7&0.4&0.5&0.7&0.5\\
0.5&0.2&0.3&0.5&0.3\\
0.7&0.4&0.6&0.8&0.5\\
\end{pmatrix};
{R_7}=\begin{pmatrix}
0.5&0.8&0.6&0.3&0.5\\
0.2&0.5&0.3&0&0.2\\
0.4&0.7&0.5&0.2&0.4\\
0.7&1&0.8&0.5&0.7\\
0.5&0.8&0.6&0.3&0.5\\
\end{pmatrix};
{R_8}=\begin{pmatrix}
0.5&0.4&0.6&0.9&0.6\\
0.6&0.5&0.7&1&0.7\\
0.4&0.3&0.5&0.8&0.5\\
0.1&0&0.2&0.5&0.2\\
0.4&0.3&0.5&0.8&0.5\\
\end{pmatrix};
{R_9}=\begin{pmatrix}
0.5&0.4&0.5&0.6&0.4\\
0.6&0.5&0.6&0.7&0.5\\
0.5&0.4&0.5&0.6&0.4\\
0.4&0.3&0.4&0.5&0.3\\
0.6&0.5&0.6&0.7&0.5\\
\end{pmatrix};
{R_{10}}=\begin{pmatrix}
0.5&0.7&0.7&1&0.5\\
0.3&0.5&0.5&0.8&0.3\\
0.3&0.5&0.5&0.8&0.3\\
0&0.2&0.2&0.5&0\\
0.5&0.7&0.7&1&0.5\\
\end{pmatrix}.
\end{equation*}}

The eigenvector method is used to obtain the eigenvector corresponding to the maximum eigenvalue of each judgment matrix, which shows intrinsic utilities of each expert about all the alternatives. Taking $R_1$ as an example, by solving $|\lambda I - {R_1}| = 0$, where $\lambda$ is the eigenvalue, and $I$ is the $5 \times 5$  identity matrix, the maximum eigenvalue of ${R_1}$ is obtained as ${\lambda _{\max }} = 2.2000$. According to equation (9), the eigenvector
${\bf{w}}={(0.6674,0.3814,0.3814,0.1907,0.4767)^T}$ corresponding to ${\lambda _{\max }} = 2.2000$ is obtained by solving $(2.2000I - {R_1})\bf{w} = 0$. Since the elements in the eigenvector can be, in general, arbitrarily large, and the utility values lie within the range [0,1], we apply normalization to scale these elements into [0,1], thereby enhances the interpretability of the results. Therefore, we have derived the intrinsic utilities of $d_1$ for alternatives as $(0.3182,0.1818,0.1818,0.0909,0.2273)^T$. Similarly, take the normalized eigenvectors corresponding to the maximum eigenvalues of ${R_2}$ to ${R_{10}}$ as the intrinsic utilities of experts $D = \{ {d_2},{d_3},\cdots, {d_{10}}\}$ with respect to alternatives $A = \{ {a_1},{a_2},{a_3},{a_4},{a_5}\}$, and the intrinsic utility matrix ${U^I}$ is calculated as follows:

\begin{equation*}
{U^I}=\begin{pmatrix}
 0.3182&0.1818&0.1818&0.0909&0.2273\\
 0.2556&0.3020&0.0702&0.2093&0.1629\\
 0.2000&0.2877&0.1561&0.1123&0.2438\\
 0.2696&0.1826&0.2261&0.0957&0.2261\\
 0.1444&0.3298&0.2371&0.1907&0.0980\\
 0.1403&0.2682&0.2256&0.1403&0.2256\\
 0.2182&0.0818&0.1727&0.3091&0.2182\\
 0.2459&0.2918&0.2000&0.0623&0.2000\\
 0.1918&0.2327&0.1918&0.1509&0.2327\\
 0.2857&0.1905&0.1905&0.0476&0.2857\\
\end{pmatrix}.
\end{equation*}

Further, the individual utilities of all experts is made public, and the preferences of experts regarding building materials may change due to the influence of empathic relationships. Due to limitations and heterogeneity of expert knowledge, only partial preorder of building materials and partial node information about the empathic network can be provided, as shown below:
\vspace{0.3cm}

(a) $ {a_1}\succeq^{d_1}{a_4}$; \ (b) ${a_1}\succeq^{d_2}{a_3}$; \ (c) ${a_2}\succeq^{d_5}{a_3}$; \ (d) ${a_5}\succeq^{d_9}{a_4}$.

(e) ${w_{23}} \ge \varepsilon^\prime$; \ (f) ${w_{11}} \ge 2{w_{13}}$.

According to the incomplete information (a)-(f) provided by the experts, the set of reference alternatives ${A^R} = \mathop  \cup \limits_{j \in N} A_j^R=A=\{{a_1},{a_2}, \cdots ,{a_5}\}$, and the set of reference nodes ${N^R}=\{1,2,3\}$.

\subsection{Calculation of the binary empathic relationships}\label{eryuan}
Based on the incomplete decision information (a)-(f), we can write constraints ${W^R}$:
\begin{align*}
\quad & {W^{R}}
\begin{array}{r@{~}l}
\begin{cases}
{W^{{A^R}}}
\begin{cases}
\hspace{0.1cm}0.3182w_{11}+0.2556w_{12}+0.2w_{13}+0.2696w_{14}+0.1444w_{15}\\
+0.1403w_{16}+0.2182w_{17}+0.2459w_{18}+0.1918w_{19}+0.2857w_{1,10}\\
\hspace{0.1cm}\ge
0.0909w_{11}+0.2093w_{12}+0.1123w_{13}+0.0957w_{14}+0.1907w_{15}\\
+0.1403w_{16}+0.3091w_{17}+0.0623w_{18}+0.1509w_{19}+0.0476w_{1,10} + \varepsilon ,\\
\hspace{0.1cm}0.3182w_{21}+0.2556w_{22}+0.2w_{23}+0.2696w_{24}+0.1444w_{25}\\
+0.1403w_{26}+0.2182w_{27}+0.2459w_{28}+0.1918w_{29}+0.2857w_{2,10}\\
\hspace{0.1cm}\ge 0.1818w_{21}+0.0702w_{22}+0.1561w_{23}+0.2261w_{24}+0.2371w_{25}\\
+0.2256w_{26}+0.1727w_{27}+0.2w_{28}+0.1918w_{29}+0.1905w_{2,10} + \varepsilon ,\\
\hspace{0.1cm}0.1818w_{51}+0.3020w_{52}+0.2877w_{53}+0.1826w_{54}+0.3298w_{55}\\
+0.2682w_{56}+0.0818w_{57}+0.2918w_{58}+0.2327w_{59}+0.1905w_{5,10}\\
\hspace{0.1cm}\ge 0.1818w_{51}+0.0702w_{52}+0.1561w_{53}+0.2261w_{54}+0.2371w_{55}\\
+0.2256w_{56}+0.1727w_{57}+0.2w_{58}+0.1918w_{59}+0.1905w_{5,10}+\varepsilon ,\\
\hspace{0.1cm}0.2273w_{91}+0.1629w_{92}+0.2438w_{93}+0.2261w_{94}+0.0980w_{95}\\
+0.2256w_{96}+0.2182w_{97}+0.2w_{98}+0.2327w_{99}+0.2857w_{9,10}\\
\hspace{0.1cm}\ge 0.0909w_{91}+0.2093w_{92}+0.1123w_{93}+0.0957w_{94}+0.1907w_{95}\\
+0.1403w_{96}+0.3091w_{97}+0.0623w_{98}+0.1509w_{99}+0.0476w_{9,10}+ \varepsilon ,
\end{cases}\\
{W^{{N^R}}}
\begin{cases}
\hspace{0.1cm}{w_{23}} \ge \varepsilon^\prime,\\[5pt]
\hspace{0.1cm}{w_{11}} \ge 2{w_{13}},
\end{cases}\\
{W^{base}}
\begin{cases}
{w_{11}} + {w_{12}} + {w_{13}} + {w_{14}} + {w_{15}}+{w_{16}} + {w_{17}} + {w_{18}} + {w_{19}} + {w_{1,10}} = 1,\\
{w_{21}} + {w_{22}} + {w_{23}} + {w_{24}} + {w_{25}}+{w_{16}} + {w_{17}} + {w_{18}} + {w_{19}} + {w_{1,10}} = 1,\\
{w_{31}} + {w_{32}} + {w_{33}} + {w_{34}} + {w_{35}}+{w_{16}} + {w_{17}} + {w_{18}} + {w_{19}} + {w_{1,10}} = 1,\\
\cdots\\
{w_{91}} + {w_{92}} + {w_{93}} + {w_{94}} + {w_{95}}+{w_{96}} + {w_{97}} + {w_{98}} + {w_{99}} + {w_{9,10}} = 1,\\
{w_{10,1}} + {w_{10,2}} + {w_{10,3}} + {w_{10,4}} + {w_{10,5}}+{w_{10,6}} + {w_{10,7}} + {w_{10,8}} + {w_{10,9}} + {w_{10,10}}= 1,\\
{w_{ij}} \ge 0, \ i \ne j \hspace{0.1cm} \mbox{and} \hspace{0.1cm} \forall i,j \in \{ 1,2,3,4,5\} ,\\
{w_{jj}} \ge \varepsilon^\prime, \ j = 1,2,\cdots,10,
\end{cases}\\
\end{cases}
\end{array}
\end{align*}
where $\varepsilon^\prime$ is the empathic relationship intensity threshold. All the empathic networks satisfying ${W^R}$ constitute the set of compatible empathic networks $\mathcal{W}$.

Taking ${d_1}$ and ${d_2}$ as an example, we calculate the binary empathic relationships between them using Model 2 and Model 3. Fixing the empathic relationship intensity threshold $\varepsilon^\prime$ at 0.01, we obtain that the optimal solutions corresponding to Model 2 and Model 3 are both ${\varepsilon ^*} = 0.1840 > 0$, indicating that ${d_1}{P^l}{d_2}$.

\subsection{Comparative analysis of the representative empathic networks}\label{daibiao}
In this subsection, using Models 5-14, we select representative empathic networks from $\mathcal{W}$, taking into account the personalized needs of experts for the network structure. In addition, in order to highlight the characteristics of different types of empathic networks, we conduct some comparative analysis of the obtained networks.

\subsubsection{The most discriminating and sparse empathic networks}
According to Model 5, the most discriminating empathic network is obtained and shown in Fig.~\ref{3-1}, with the corresponding local empathic weight matrix presented below:
\begin{equation*}
W_{1}=\begin{pmatrix}
1.0000&0&0&0&0&0&0&0&0&0\\
0&0.9900&0.0100&0&0&0&0&0&0&0\\
0&0&1.0000&0&0&0&0&0&0&0\\
0&0&0&1.0000&0&0&0&0&0&0\\
0&0.9900&0&0&0.0100&0&0&0&0&0\\
0&0&0&0&0&1.0000&0&0&0&0\\
0&0&0&0&0&0&1.0000&0&0&0\\
0&0&0&0&0&0&0&1.0000&0&0\\
0&0&0&0&0&0&0&0&0.0100&0.9900\\
0&0&0&0&0&0&0&0&0&1.0000\\
\end{pmatrix}.
\end{equation*}

The sparse empathic network is obtained according to Model 6 and shown in Fig.~\ref{3-2}, with the corresponding local empathic weight matrix presented below:
\begin{equation*}
W_{2}=\begin{pmatrix}
1.0000&0&0&0&0&0&0&0&0&0\\
0&0.0100&0.9900&0&0&0&0&0&0&0\\
0&0&1.0000&0&0&0&0&0&0&0\\
0&0&0&1.0000&0&0&0&0&0&0\\
0&0&0&0&1.0000&0&0&0&0&0\\
0&0&0&0&0&1.0000&0&0&0&0\\
0&0&0&0&0&0&1.0000&0&0&0\\
0&0&0&0&0&0&0&1.0000&0&0\\
0&0&0&0&0&0&0&0&1.0000&0\\
0&0&0&0&0&0&0&0&0&1.0000\\
\end{pmatrix}.
\end{equation*}

According to Fig.~\ref{f3}, there are isolated points $d_{1}$, $d_{4}$, $d_{6}$, $d_{7}$ and $d_{8}$ in the most discriminating network, and there are links between the remaining nodes. In the sparse empathic network, In the sparse network, there is only a one-way link between nodes   $d_{2}$ and $d_{3}$.

\begin{figure}
\centering
\subfigure[The most discriminating empathic network]{
\resizebox*{4cm}{!}{\includegraphics{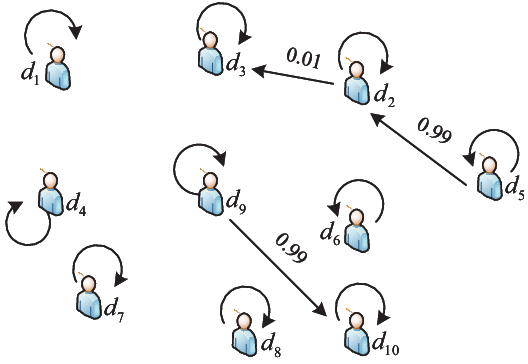}}\label{3-1}}\hspace{1cm}
\subfigure[The sparse empathic network]{
\resizebox*{4cm}{!}{\includegraphics{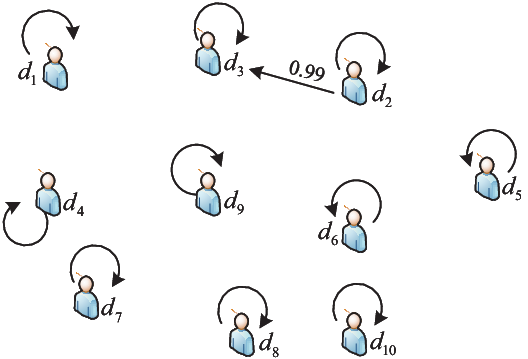}}\label{3-2}}
\caption{Comparison of the two empathic networks.}\label{f3}
\end{figure}

To further investigate the influence of different network structures on the decision results, we obtain the empathic utility matrices $U_1$ and $U_2$ of DMs over the whole set $A$ under the most discriminating and sparse empathic network structures according to equation (3).
\begin{equation*}
{U_1}=\begin{pmatrix}
0.3182&0.1818&0.1818&0.0909&0.2273\\
0.2550&0.3019&0.0711&0.2083&0.1637\\
0.2000&0.2877&0.1561&0.1123&0.2438\\
0.2696&0.1826&0.2261&0.0957&0.2261\\
0.2545&0.3023&0.0719&0.2091&0.1623\\
0.1403&0.2682&0.2256&0.1403&0.2256\\
0.2182&0.0818&0.1727&0.3091&0.2182\\
0.2459&0.2918&0.2000&0.0623&0.2000\\
0.2848&0.1909&0.1905&0.0486&0.2852\\
0.2857&0.1905&0.1905&0.0476&0.2857\\
\end{pmatrix};
{U_2}=\begin{pmatrix}
0.3182&0.1818&0.1818&0.0909&0.2273\\
0.2006&0.2878&0.1552&0.1133&0.2430\\
0.2000&0.2877&0.1561&0.1123&0.2438\\
0.2696&0.1826&0.2261&0.0957&0.2261\\
0.1444&0.3298&0.2371&0.1907&0.0980\\
0.1403&0.2682&0.2256&0.1403&0.2256\\
0.2182&0.0818&0.1727&0.3091&0.2182\\
0.2459&0.2918&0.2000&0.0623&0.2000\\
0.1918&0.2327&0.1918&0.1509&0.2327\\
0.2857&0.1905&0.1905&0.0476&0.2857\\
\end{pmatrix}.
\end{equation*}

\subsubsection{Central, distributed, and highly resilient local empathic networks}
In order to obtain the central and distributed empathic networks, the entropy based on empathic centrality is calculated as follows:
\begin{equation*}
 - \sum\limits_{j = 1}^{10} {((0.1*( \sum\limits_{j = 1}^{10}{\omega _j }))} \ln (0.1*(\sum\limits_{j = 1}^{10}{\omega _j }))).
\end{equation*}
 According to Model 7 and Model 8, the central and distributed empathic networks can be obtained by calculating the minimum and the maximum values of entropy, subject to constraints ${W^R}$. The results are shown in Fig.~\ref{4-1} and Fig.~\ref{4-2}. The empathic weight matrix corresponding to the central empathic network is presented below:
\begin{equation*}
W_{3}=\begin{pmatrix}
0.0100&0.9900&0&0&0&0&0&0&0&0\\
0&0.9900&0.0100&0&0&0&0&0&0&0\\
0&0.9900&0.0100&0&0&0&0&0&0&0\\
0&0.9900&0&0.0100&0&0&0&0&0&0\\
0&0.9900&0&0&0.0100&0&0&0&0&0\\
0&0.9900&0&0&0&0.0100&0&0&0&0\\
0&0.9900&0&0&0&0&0.0100&0&0&0\\
0&0.9900&0&0&0&0&0&0.0100&0&0\\
0&0.5934&0&0.3966&0&0&0&0&0.0100&0\\
0&0.9900&0&0&0&0&0&0&0&0.0100\\
\end{pmatrix}.
\end{equation*}

Using $W_ {3}$, we can calculate the empathic centralities of each expert, which are: $\omega _{1}=\omega _{5}=\omega _{6}=\omega _{7}=\omega _{8}=\omega _{9}=\omega _{10}=0.0100$, $\omega _{2}=9.5034$, $\omega _{3}=0.0200$ and $\omega _{4}=0.4066$. Since $\omega _{2}=9.5034>5$, according to Definition~\ref{dy1}, the obtained network is the central empathic network. $d_{2}$ is the ``opinion leader" in the network, who has more knowledge and experience and can help others to make decisions.

The weight matrix of the distributed empathic network is as follows:
\begin{equation*}
W_{4}=\begin{pmatrix}
0.0100&0&0&0.9900&0&0&0&0&0&0\\
0&0.0100&0.0100&0&0&0.1232&0.3805&0.3465&0.1298&0\\
0.3397&0&0.0100&0&0&0.4334&0&0&0.0992&0.1177\\
0.3106&0&0.1850&0.0100&0.2031&0&0&0&0.1364&0.1549\\
0&0.4950&0.1389&0&0.1670&0&0&0&0.0903&0.1088\\
0.3397&0&0&0&0&0.4434&0&0&0.0992&0.1177\\
0&0&0&0&0&0&0.6195&0&0.1810&0.1995\\
0&0&0&0&0&0&0&0.6535&0.1640&0.1825\\
0&0&0.5172&0&0.4728&0&0&0&0.0100&0\\
0&0.4950&0.1389&0&0.1570&0&0&0&0.0903&0.1188\\

\end{pmatrix}.
\end{equation*}

Using $W_{4}$,  we can also calculate the empathic  centralities of each expert, which are: $\omega _{1}=\omega _{2}=\omega _{3}=\omega _{4}=\omega _{6}=\omega _{7}=\omega _{8}=1.0000$, $\omega _{1}=1.0001$, $\omega _{5}=\omega _{10}=0.9999$, $\omega _{9}=1.0002$. Setting $\delta=0.015$, we find that $|{\omega _j} - 1| \le \delta, j=1,2,\cdots,10$, according to Definition~\ref{dy2}, this network is a distributed empathic network, and all experts in the network have equal status in the decision-making process.

\begin{figure}
\centering
\subfigure[The central empathic network]{
\resizebox*{4cm}{!}{\includegraphics{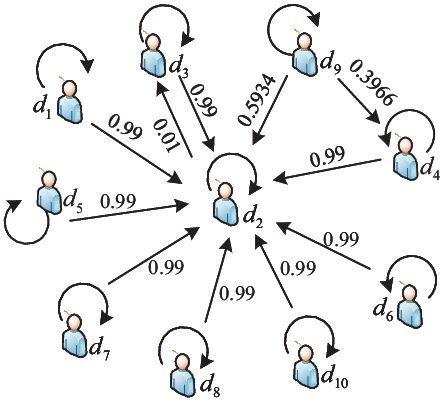}}\label{4-1}}
\subfigure[The distributed empathic network]{
\resizebox*{5cm}{!}{\includegraphics{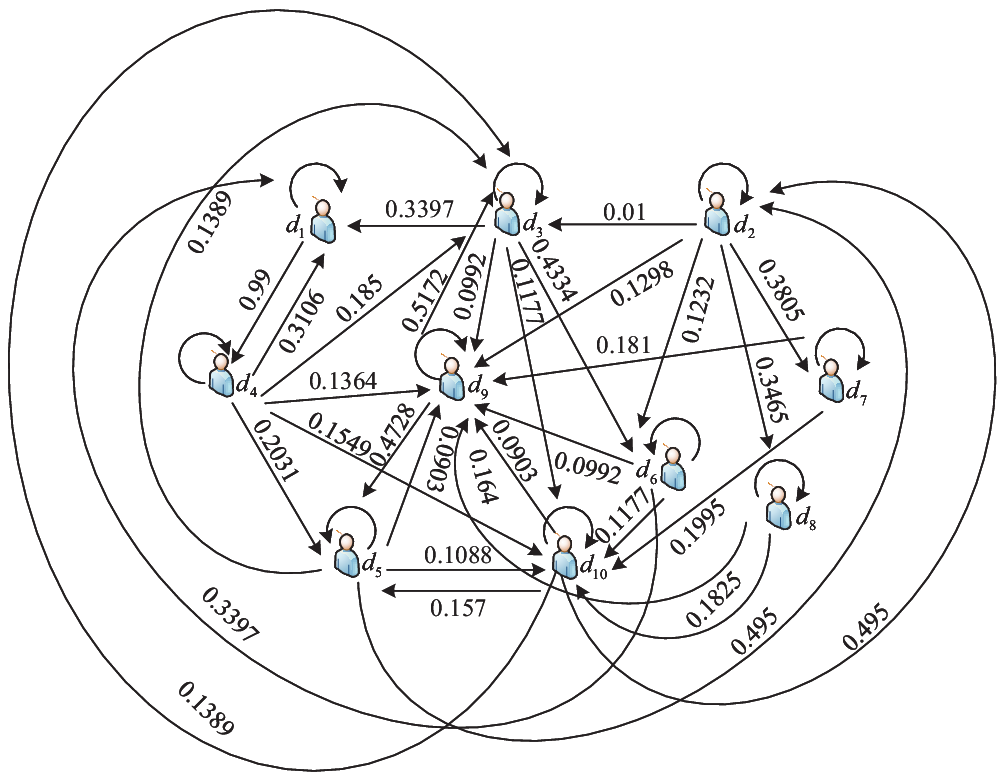}}\label{4-2}}
\subfigure[The highly resilient local empathic network]{
\resizebox*{6cm}{!}{\includegraphics{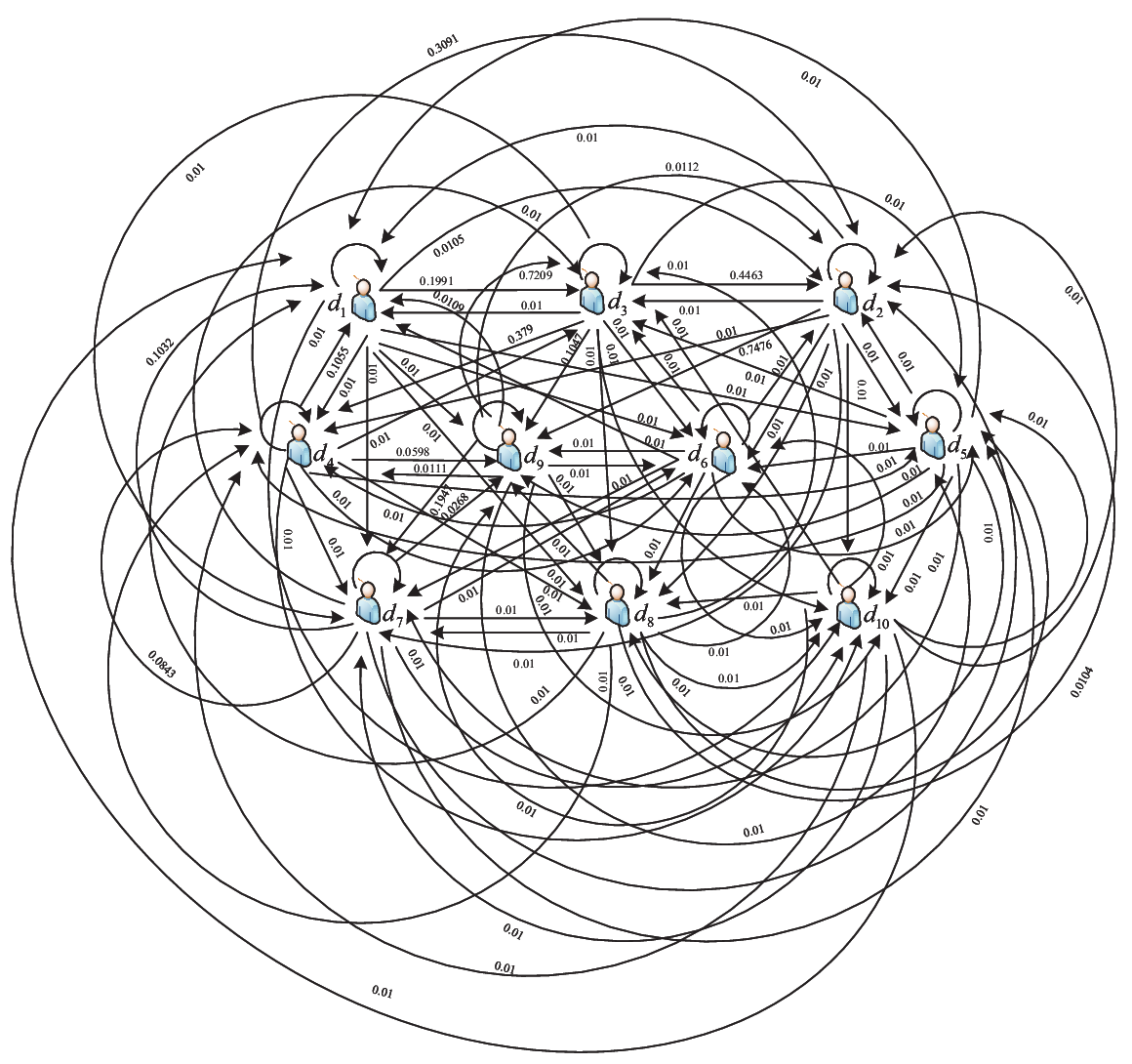}}\label{4-3}}
\caption{Central, distributed, and highly resilient local empathic networks.}\label{f4}
\end{figure}

Model 9 has been used to obtain the local empathic network with high resilience, as shown in Fig.~\ref{4-3}. The corresponding weight matrix is presented below:
\begin{equation*}
W_{5}=\begin{pmatrix}
0.7204&0.0105&0.1991&0.0100&0.0100&0.0100&0.0100&0.0100&0.0100&0.0100\\
0.0100&0.1724&0.0100&0.0100&0.0100&0.0100&0.0100&0.0100&0.7476&0.0100\\
0.0100&0.4463&0.0100&0.3790&0.0100&0.0100&0.0100&0.0100&0.1047&0.0100\\
0.1055&0.3091&0.0100&0.4655&0.0100&0.0100&0.0100&0.0100&0.0598&0.0100\\
0.0100&0.0100&0.0100&0.0100&0.9100&0.0100&0.0100&0.0100&0.0100&0.0100\\
0.0100&0.0100&0.0100&0.0100&0.0100&0.9100&0.0100&0.0100&0.0100&0.0100\\
0.1032&0.0104&0.0100&0.0843&0.0100&0.0100&0.7253&0.0100&0.0268&0.0100\\
0.0100&0.0100&0.0100&0.0100&0.0100&0.0100&0.0100&0.9100&0.0100&0.0100\\
0.0109&0.0112&0.7209&0.0111&0.0100&0.0100&0.1947&0.0100&0.0111&0.0100\\
0.0100&0.0100&0.0100&0.0100&0.0100&0.0100&0.0100&0.0100&0.0100&0.9100\\
\end{pmatrix}.
\end{equation*}

According to $W_{5}$, the empathic centralities of experts are: $\omega _{2}=\omega _{4}=0.9999$, $\omega _{1}=\omega _{3}=\omega _{5}=\omega _{6}=\omega _{7}=\omega _{8}=\omega _{9}=\omega _{10}=1.0000$. Setting the threshold for the density of the empathic network ${\rho_0}=0.9$, we find that $\rho=1\ge {\rho _0}$, $|{\omega _j} - 1| \le \delta,$ for $j=1,2,\cdots,10$. According to Definition~\ref{d}, this local empathic network is highly resilient. By comparing Figs~\ref{4-1}, ~\ref{4-2} and ~\ref{4-3}, one can conclude that there are significantly more links in the high-resilient empathic network than in the first two networks, indicating that the highly resilient empathic network has a high degree of redundancy. If one expert drops out, the remaining experts can still form a network and keep information flowing.

We use equation (3) to obtain the empathic utility matrices $U_3$, $U_4$ and $U_5$ of the experts over the whole set $A$ under the structures of the central, distributed and highly resilient local empathic network.
\begin{equation*}
{U_3}=\begin{pmatrix}
0.2562&0.3008&0.0713&0.2081&0.1635\\
0.2550&0.3019&0.0711&0.2083&0.1637\\
0.2550&0.3019&0.0711&0.2083&0.1637\\
0.2557&0.3008&0.0718&0.2082&0.1635\\
0.2545&0.3023&0.0719&0.2091&0.1623\\
0.2544&0.3017&0.0718&0.2086&0.1635\\
0.2552&0.2998&0.0712&0.2103&0.1635\\
0.2555&0.3019&0.0715&0.2078&0.1633\\
0.2605&0.2540&0.1332&0.1637&0.1887\\
0.2559&0.3009&0.0714&0.2077&0.1641\\
\end{pmatrix};
{U_4}=\begin{pmatrix}
0.2701&0.1826&0.2257&0.0957&0.2261\\
0.2150&0.2014&0.1900&0.1793&0.2144\\
0.2236&0.2264&0.2025&0.1134&0.2341\\
0.2383&0.2397&0.1914&0.1167&0.2139\\
0.2268&0.2863&0.1341&0.1699&0.1830\\
0.2230&0.2262&0.2032&0.1137&0.2340\\
0.2269&0.1308&0.1797&0.2283&0.2343\\
0.2443&0.2636&0.1969&0.0741&0.2210\\
0.1736&0.3071&0.1948&0.1498&0.1748\\
0.2282&0.2849&0.1336&0.1684&0.1848\\
\end{pmatrix};
\end{equation*}
\begin{equation*}
{U_5}=\begin{pmatrix}
0.2867&0.2072&0.1772&0.1000&0.2289\\
0.2057&0.2442&0.1714&0.1594&0.2193\\
0.2519&0.2447&0.1507&0.1550&0.1977\\
0.2619&0.2259&0.1694&0.1365&0.2062\\
0.1527&0.3203&0.2319&0.1857&0.1094\\
0.1490&0.2649&0.2216&0.1404&0.2242\\
0.2318&0.1165&0.1790&0.2534&0.2192\\
0.2440&0.2861&0.1985&0.0702&0.2012\\
0.2063&0.2441&0.1621&0.1516&0.2357\\
0.2798&0.1949&0.1900&0.0569&0.2783\\
\end{pmatrix}.
\end{equation*}

\subsubsection{Highly resilient global empathic network}
Model 10 has been used to obtain the local empathic network, as shown in Fig.~\ref{5-1}. The corresponding irreducible weight matrix is presented below:
\begin{equation*}
W_{6}=\begin{pmatrix}
0.0100&0.9900&0&0&0&0&0&0&0&0\\
0&0.0100&0.9900&0&0&0&0&0&0&0\\
0&0&0.0100&0.9900&0&0&0&0&0&0\\
0&0&0&0.0100&0.9900&0&0&0&0&0\\
0&0&0&0&0.0100&0.9900&0&0&0&0\\
0&0&0&0&0&0.0100&0.9900&0&0&0\\
0&0&0&0&0&0&0.0100&0.9900&0&0\\
0&0&0&0&0&0&0&0.0100&0.9900&0\\
0&0&0&0&0&0&0&0&0.0100&0.9900\\
0.9900&0&0&0&0&0&0&0&0&0.0100\\
\end{pmatrix}.
\end{equation*}

According to equation (7), after the transference of the empathic relationships, the global empathic weight matrix  is calculated as follows:
\begin{equation*}
G=\begin{pmatrix}
0.1046&0.1035&0.1025&0.1015&0.1005&0.0995&0.0985&0.0975&0.0965&0.0955\\
0.0955&0.1046&0.1035&0.1025&0.1015&0.1005&0.0995&0.0985&0.0975&0.0965\\
0.0965&0.0955&0.1046&0.1035&0.1025&0.1015&0.1005&0.0995&0.0985&0.0975\\
0.0975&0.0965&0.0955&0.1046&0.1035&0.1025&0.1015&0.1005&0.0995&0.0985\\
0.0985&0.0975&0.0965&0.0955&0.1046&0.1035&0.1025&0.1015&0.1005&0.0995\\
0.0995&0.0985&0.0975&0.0965&0.0955&0.1046&0.1035&0.1025&0.1015&0.1005\\
0.1005&0.0995&0.0985&0.0975&0.0965&0.0955&0.1046&0.1035&0.1025&0.1015\\
0.1015&0.1005&0.0995&0.0985&0.0975&0.0965&0.0955&0.1046&0.1035&0.1025\\
0.1025&0.1015&0.1005&0.0995&0.0985&0.0975&0.0965&0.0955&0.1046&0.1035\\
0.1035&0.1025&0.1015&0.1005&0.0995&0.0985&0.0975&0.0965&0.0955&0.1046\\
\end{pmatrix}.
\end{equation*}

Using $G$, we can calculate the empathic centralities of each expert, which are: $\omega _{1}=\omega _{2}=\cdots=\omega _{10}=1.0001$. This means that all experts have equal influence in the empathic network. In addition, the density of the empathic network is $\rho=1$. Taking again ${\rho_0}=0.9$ and $\delta=0.015$, we get $\rho \ge {\rho_0}$ and $|{\omega _j} - 1| \le \delta, \ j=1,2,\cdots,10$. According to Definition~\ref{d}, the global empathic network obtained from Model 10 is an empathic network with high resilience. Fig.~\ref{5-2} shows the global empathic network. It can be observed that when the empathic relationships are not transmitted, the local empathic network is relatively sparse. After the transference of empathic relationships, the corresponding global empathic network is very dense and the organizational structure is flat, which further indicates that the global empathic network is highly resilient.

\begin{figure}
\centering
\subfigure[The local empathic network]{
\resizebox*{4cm}{!}{\includegraphics{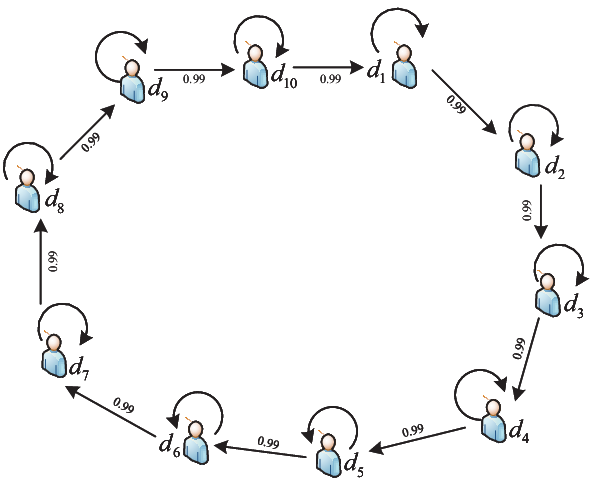}}\label{5-1}}\hspace{1cm}
\subfigure[The highly resilient global empathic network]{
\resizebox*{6cm}{!}{\includegraphics{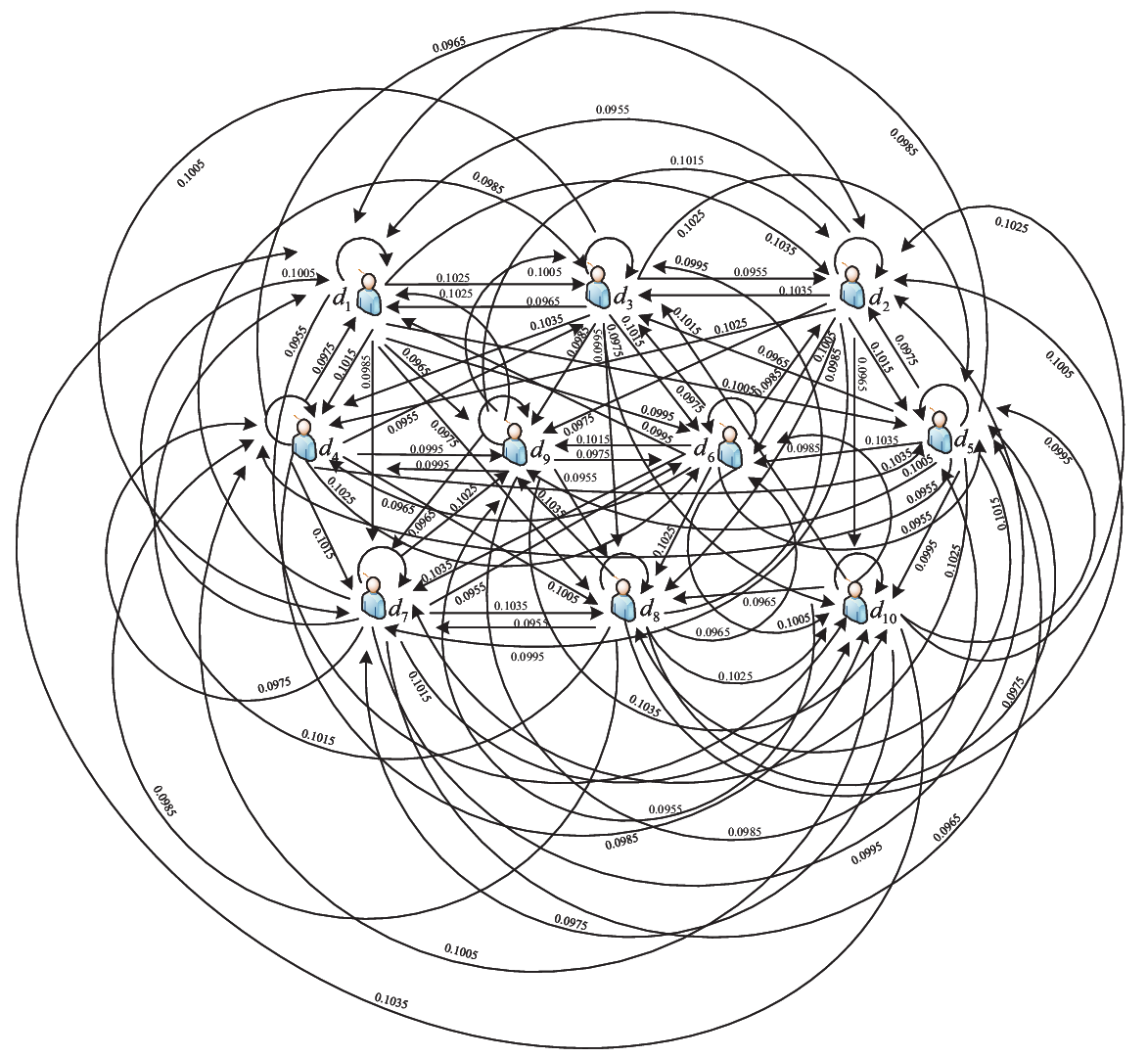}}\label{5-2}}
\caption{Local and corresponding highly resilient global empathic networks.}\label{f5}
\end{figure}

We use equation (6) to obtain the global empathic utility matrix $U_6$ of the experts over the whole set $A$ under the highly resilient global empathic network structure.
\begin{equation*}
{U_6}=\begin{pmatrix}
0.2273&0.2353&0.1847&0.1411&0.2116\\
0.2264&0.2358&0.1847&0.1417&0.2114\\
0.2261&0.2352&0.1859&0.1410&0.2119\\
0.2264&0.2346&0.1862&0.1413&0.2116\\
0.2260&0.2352&0.1858&0.1417&0.2115\\
0.2268&0.2342&0.1853&0.1412&0.2126\\
0.2277&0.2338&0.1849&0.1412&0.2125\\
0.2278&0.2354&0.1850&0.1395&0.2124\\
0.2276&0.2348&0.1848&0.1403&0.2125\\
0.2279&0.2348&0.1848&0.1402&0.2123\\
\end{pmatrix}.
\end{equation*}

\subsubsection{Star, bus, and tree empathic networks}
Empathic networks with star, bus and tree topologies are calculated according to Models 12-14, as shown in Fig.~\ref{f6}. Observe that the bus-type local empathic network presented in Fig.~\ref{f6}b is equivalent to the network from Fig.~\ref{5-1} after cutting the link between nodes 10 and 1. In order to make comparison of the bus-type local empathic network with the global empathic network with high resilience, the local empathic weight matrix $W_{7}$ has been used for the calculation of the global empathic weight matrix $G^\prime$ according to  equation (7),
\begin{equation*}
W_{7}=\begin{pmatrix}
0.9900&0.0100&0&0&0&0&0&0&0&0\\
0&0.9900&0.0100&0&0&0&0&0&0&0\\
0&0&0.9900&0.0100&0&0&0&0&0&0\\
0&0&0&0.9900&0.0100&0&0&0&0&0\\
0&0&0&0&0.9900&0.0100&0&0&0&0\\
0&0&0&0&0&0.9900&0.0100&0&0&0\\
0&0&0&0&0&0&0.9900&0.0100&0&0\\
0&0&0&0&0&0&0&0.9900&0.0100&0\\
0&0&0&0&0&0&0&0&0.0100&0.9900\\
0&0&0&0&0&0&0&0&0&1.0000\\
\end{pmatrix},
\end{equation*}

\begin{equation*}
G^\prime=\begin{pmatrix}
0.9900&0.0099&0.0001&0&0&0&0&0&0&0\\
0&0.9900&0.0099&0.0001&0&0&0&0&0&0\\
0&0&0.9900&0.0099&0.0001&0&0&0&0&0\\
0&0&0&0.9900&0.0099&0.0001&0&0&0&0\\
0&0&0&0&0.9900&0.0099&0.0001&0&0&0\\
0&0&0&0&0&0.9900&0.0099&0.0001&0&0\\
0&0&0&0&0&0&0.9900&0.0099&0&0.0001\\
0&0&0&0&0&0&0&0.9900&0.0001&0.0099\\
0&0&0&0&0&0&0&0&0.0100&0.9900\\
0&0&0&0&0&0&0&0&0&1.0000\\
\end{pmatrix}.
\end{equation*}

\begin{figure}[!h]
\centering
\subfigure[The star topology]{
\resizebox*{4cm}{!}{\includegraphics{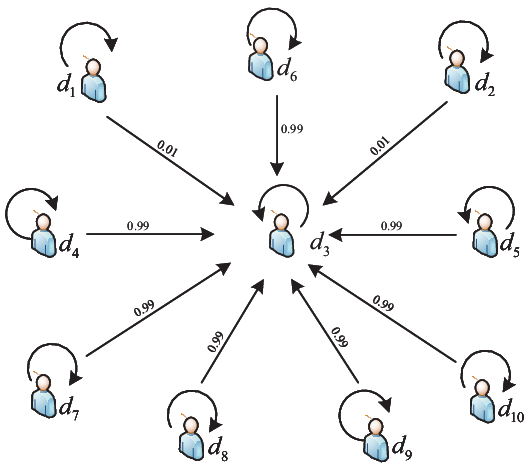}}\label{6-1}}\hspace{1cm}
\subfigure[The bus topology]{
\resizebox*{4.5cm}{!}{\includegraphics{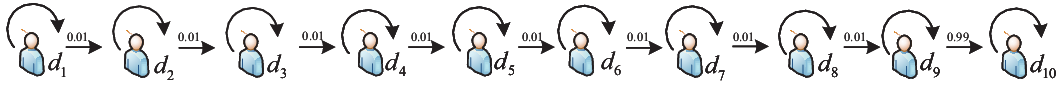}}\label{6-2}}\hspace{1cm}
\subfigure[The tree topology]{
\resizebox*{3cm}{!}{\includegraphics{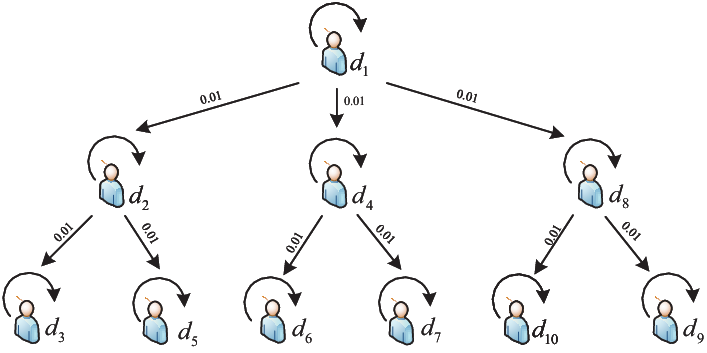}}\label{6-3}}
\caption{Different topologies of local empathic network.}\label{f6}
\end{figure}

\begin{figure}[!h]
\centering
\includegraphics[width=0.3\textwidth]{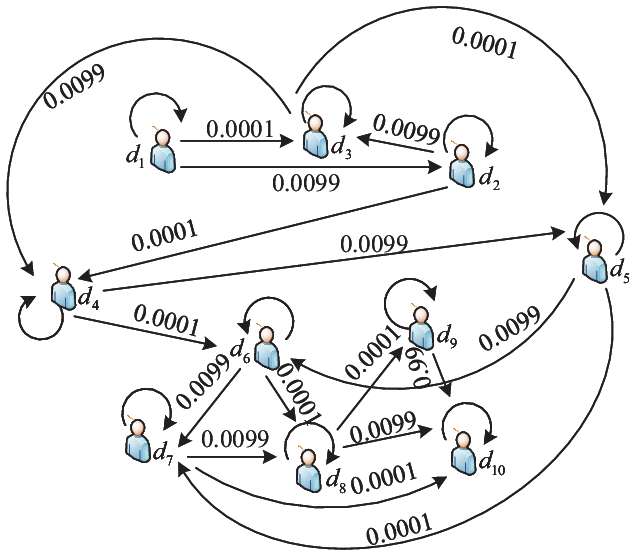}
\caption{The global empathic network corresponding to the bus-type local empathic network.}\label{f7}
\end{figure}

Fig.~\ref{f7} shows the global empathic network corresponding to the bus-type local empathic network. One can observe that global empathic network corresponding to the bus-type local empathic network from Fig.~\ref{5-2} has fewer network links and is significantly less resilient than the global empathic network with high resilience. This further illustrates the importance of loops in local empathic networks for high-resilient global empathic networks.

We use equation (3) to obtain the empathic utility matrices $U_7$, $U_8$ and $U_9$ of the experts over the whole set $A$ under the structures of the star, bus, and tree local empathic network.
\begin{equation*}
{U_7}=\begin{pmatrix}
0.3170&0.1829&0.1815&0.0911&0.2275\\
0.2550&0.3019&0.0711&0.2083&0.1637\\
0.2000&0.2877&0.1561&0.1123&0.2438\\
0.2007&0.2866&0.1568&0.1121&0.2436\\
0.1994&0.2881&0.1569&0.1131&0.2423\\
0.1994&0.2875&0.1568&0.1126&0.2436\\
0.2002&0.2856&0.1563&0.1143&0.2435\\
0.2005&0.2877&0.1565&0.1118&0.2434\\
0.1999&0.2872&0.1565&0.1127&0.2437\\
0.2009&0.2867&0.1564&0.1117&0.2442\\
\end{pmatrix};
{U_8}=\begin{pmatrix}
0.3176&0.1830&0.1807&0.0921&0.2267\\
0.2550&0.3019&0.0711&0.2083&0.1637\\
0.2007&0.2866&0.1568&0.1121&0.2436\\
0.2683&0.1841&0.2262&0.0966&0.2248\\
0.1444&0.3292&0.2370&0.1902&0.0993\\
0.1411&0.2663&0.2251&0.1420&0.2255\\
0.2185&0.0839&0.1730&0.3066&0.2180\\
0.2454&0.2912&0.1999&0.0632&0.2003\\
0.2848&0.1909&0.1905&0.0486&0.2852\\
0.2857&0.1905&0.1905&0.0476&0.2857\\
\end{pmatrix};
\end{equation*}
\begin{equation*}
{U_9}=\begin{pmatrix}
0.3164&0.1841&0.1813&0.0918&0.2264\\
0.2539&0.3021&0.0727&0.2081&0.1631\\
0.2000&0.2877&0.1561&0.1123&0.2438\\
0.2678&0.1824&0.2256&0.0983&0.2260\\
0.1444&0.3298&0.2371&0.1907&0.0980\\
0.1403&0.2682&0.2256&0.1403&0.2256\\
0.2182&0.0818&0.1727&0.3091&0.2182\\
0.2458&0.2902&0.1998&0.0630&0.2012\\
0.1918&0.2327&0.1918&0.1509&0.2327\\
0.2857&0.1905&0.1905&0.0476&0.2857\\
\end{pmatrix}.
\end{equation*}

\subsection{Comparison of decision results under different network structures}\label{decision}
As mentioned above, network structure is an important factor affecting decision-making. Therefore, we study the impact of different network structures on the outcome of emergency decision-making in this subsection. Salehi-Abari et al. \cite{10A.Salehi-Abari2019} proposed the equation of social welfare $sw(\bm{u},a_s)$ of alternative $a_s \in A$ as follows:
\begin{equation}
sw(\bm{u},a_s) = \sum\limits_{j \in N} {{u_j}(a_s)},
\end{equation}
where $\bm{u}=({u_1}( \cdot ),{u_2}( \cdot ), \cdots ,{u_n}( \cdot ))$ is a vector specifying the utility function of each expert.

Experts can choose the alternative with the maximum social welfare as the best alternative $a^*$. That is
\begin{equation}
{a^*} = \arg \mathop {\max }\limits_{{a_s} \in A} sw({\bf{u}},{a_s}).
\end{equation}

According to the matrices ${U_1}$ to ${U_9}$ obtained earlier, equation (13) is used to calculate the social welfare of all alternatives under different network structures. The best alternative $a^*$ can be obtained by equation (14). We show the social welfare under different network structures as well as the best alternative $a^*$ in Table~\ref{table1}. ``Without considering network" in Table ~\ref{table1} refers to the calculation of social welfare directly based on the intrinsic utilities of experts without considering the empathic relationships between experts. According to Table~\ref{table1}, we can find that the best alternative may be different under different network structures, and the best alternative exists among $a_1$, $a_2$ and $a_5$. These results confirm that the method proposed in this paper can infer the specific network structure under different emergency scenarios, thus assisting experts to obtain more objective and scientifically based decision aiding.

\renewcommand\arraystretch{0.95}
\begin{table}[!h]
\scriptsize
\caption{Social welfare $sw$ and best alternative $a^*$ under different network structures}
\label{table1}
\setlength\tabcolsep{15pt}
\centering
\begin{tabular*}{15cm}{ccccccc}
\hline
Network structures&$a_1$&$a_2$&$a_3$&$a_4$&$a_5$&$a^*$\\
\hline
Without considering network
&2.2697&\textbf{2.3489}&1.8519&1.4091&2.1203&$a_2$\\
The most discriminating empathic network
&\textbf{2.4722}&2.2795&1.6862&1.3243&2.2378&$a_1$\\
Sparse empathic network
&2.2147&\textbf{2.3347}&1.9369&1.3131&2.2004&$a_2$\\
Central empathic network
&2.5581&\textbf{2.9658}&0.7762&2.0401&1.6598&$a_2$\\
Distributed empathic network
&2.2697&\textbf{2.3489}&1.8519&1.4091&2.1203&$a_2$\\
Highly resilient local empathic network
&2.2696&\textbf{2.3489}&1.8519&1.4091&2.1203&$a_2$\\
Highly resilient global empathic network
&2.2699&\textbf{2.3491}&1.8521&1.4092&2.1205&$a_2$\\
Star empathic network
&2.1730&\textbf{2.7820}&1.5049&1.1999&2.3394&$a_2$\\
Bus empathic network
&\textbf{2.3614}&2.3076&1.8507&1.3074&2.1728&$a_1$\\
Tree empathic network
&2.2642&\textbf{2.3496}&1.8532&1.4122&2.1206&$a_2$\\
\hline
\end{tabular*}
\end{table}

The comparative analysis between the studies of this paper and relevant literatures is shown in Table~\ref{table2}. Salehi-Abari et al. \cite{10A.Salehi-Abari2019} proposes empathic networks for the first time and presents local and global empathic social choice models. Xu et al.\cite{4X.Xu2020} proposes a new consensus reaching framework based on the social network in large-group emergency decision-making. It is directly assumed in this method that the network structure is known. However, we take into account the scarcity of data in emergency decision making and the network structure may be incomplete. Inspired by the preference disaggregation, an empathetic network learning method is proposed to infer the complete empathetic network structure based on the indirect preference information and node information provided by DMs to assist the emergency decision. Generally, literatures often use subjective scoring method to obtain the complete network structure, that is, the social relationships between decision makers are given by the mutual evaluation matrix created by themselves, such as literature \cite{3S.M.Yu2021}. The drawback of this method is that requiring DMs to provide the evaluation matrix directly may be burdensome to them and is too subjective. In addition, in view of the incomplete trust network structure, Xu et al.\cite{42Y.Xu2021} and Sun et al.\cite{2J.Sun2023} constructed uncertainty optimization models and dynamic trust propagation mode based on the existing network node information to complete the trust network. The defect of these methods is that they ignore that the DM's preferences about the alternatives will be affected by the neighbors with whom they have a trust relationship. Therefore, the method proposed in this paper takes into account the indirect preference information provided by DMs after they are influenced by neighbors.

\renewcommand\arraystretch{1}
\begin{table}[!h]
\scriptsize
\caption{The comparison of the related literatures.}
\label{table2}
\setlength\tabcolsep{9pt}
\centering
\begin{tabular*}{16cm}{p{2cm}<{\centering}p{2cm}<{\centering}p{2cm}<{\centering}p{2cm}<{\centering}p{2cm}<{\centering}
p{2.5cm}<{\centering}}
\hline
References&Types of social networks&Is the network structure complete?&Indirect preference information&Node information&Methods\\
\hline
\cite{10A.Salehi-Abari2019}&Empathic networks&$\checkmark$&$\times$&$\times$&$\times$\\
\cite{4X.Xu2020}&Trust networks&$\checkmark$&$\times$&$\times$&$\times$\\
\cite{3S.M.Yu2021}&Trust networks&$\times$&$\times$&$\times$&A mutual evaluation matrix\\
\cite{2J.Sun2023}&Trust networks&$\times$&$\times$&$\checkmark$&A dynamic trust propagation mode\\
\cite{42Y.Xu2021}&Trust networks&$\times$&$\times$&$\checkmark$&Uncertainty optimization models\\
This paper&Empathic networks&$\times$&$\checkmark$&$\checkmark$&The empathic network learning method\\
\hline
\end{tabular*}
\end{table}

\section{ Conclusion}
Emergency decision-making is today a hot topic of operational research and management science. Given the pressure of time constraints, the scarcity of information and resources, and the constrained cognitive capacities of experts, the decision information provided is often inconsistent and incomplete. Using the robust ordinal regression approach based on preference disaggregation, we introduced in this paper a novel empathic network learning methodology designed to address the challenge of inferring the empathic network influencing the decision-making behavior of experts in the context of incomplete decision information. The primary findings and innovations of this paper are outlined as follows:

(1)	The method proposed in this paper only requires the DMs to provide a holistic partial preference information, such as pairwise comparisons of some reference alternatives or pairwise comparisons of the intensities of the empathic relationships between network nodes representing experts. First, we used robust ordinal regression approach to construct a model based on independent holistic preference information to complete the incomplete judgment matrix of each expert and calculate their intrinsic utilities. Then, the partial empathic preference information and the node information on the reference set were translated into mathematical constrains defining the compatible empathic networks able to reproduce the decision information. Moreover, in the case of inconsistent decision information, a mixed $\{0,1\}$ linear programming model was built to find and eliminate a minimum number of inconsistent constraints.

(2)	Based on the set of compatible empathic networks, we defined the necessary and possible empathic relationships between DMs, and formulated models to calculate the binary empathic relationships between any two DMs. Results of these models presented to DMs can deepen the DMs' understanding of the decision problem and stimulate them for providing new preference and node information. This interactive decision-making process promotes rational and intelligible decision-making.

(3) Since in some emergency environments the DMs may need to see more clear outcomes than just necessary and possible, we designed six types of target networks for different needs and we built respective models to assist the DMs in selecting the most representative empathic network from the set of compatible empathic networks. All compatible empathic networks played a role in the selection process ensuring robustness of the decision.

(4)	The proposed method was validated on a hypothetical case study concerning selection of the best material for emergency shelters. The results of this case study show that this method can not only infer the empathic network able to reproduce the decision information representing the prior knowledge of the DMs, but also assist them in obtaining the most representative empathic network. The case study illustrates a rich spectrum of possible insights into the decision problem and demonstrates the computational efficiency of this method.

In the era of big data, people have increasingly higher requirements for data integrity. The method proposed in this paper can learn the empathic network and restore the integrity of information in the case of incomplete and inconsistent decision information. It provides a new framework for network structure inference in the field of emergency decision-making. The empathic network compatible with incomplete preference and node information coming from DMs is, moreover, an excellent basis for advanced research on consensus reaching, risk analysis and strategy selection in emergency decision-making considered recently, for example, in \cite{43R.Ding2020, 44J.Cao2022, 45X.Yin2021}. In the future research, we intend to generalize our methodology to other types of social networks, such as trust networks.

\section*{CRediT authorship contribution statement}
\textbf{Simin Shen:} Writing - original draft, Conceptualization, Methodology, Formal analysis. \textbf{Zaiwu Gong:} Conceptualization, Methodology, Writing - review \& editing, Funding acquisition. \textbf{Bin Zhou:} Conceptualization, Methodology, Validation, Writing - review \& editing. \textbf{Roman S\l owi\'nski:} Conceptualization, Methodology, Writing - review \& editing, Funding acquisition.

\section*{Declaration of competing interest}
The authors declare that they have no known competing financial interests or personal relationships that could have appeared to
influence the work reported in this paper.

\section*{Data availability}
Data will be made available on request.

\section*{Acknowledgments} This research was partially supported by the National Natural Science Foundation of China (grant number 71971121), the SBAD funding from the Polish Ministry of Education and Science, the Major Project Plan of Philosophy and Social Sciences Research at Jiangsu University, China (grant number 2020SJZDA076), and the Startup Foundation for Introducing Talent of NUIST.

\section*{Appendix A}
The proof of Theorem~\ref{T1}.
\begin{proof}
The model of entropy maximization is as follows:

\begin{align}
\begin{array}{r@{~}l}
\quad &\max \ -\sum\limits_{j = 1}^n {\omega _j^\prime} \ln \omega _j^\prime \\[10pt]
\quad &\text{\textup{subject to }} \sum\limits_{j = 1}^n {\omega _j^\prime}  = 1\\[10pt]\tag{A.1}
\end{array}
\end{align}

Construct the Lagrange function $L(\omega _1^\prime,\omega _2^\prime, \cdots ,\omega _n^\prime,\lambda ) = \sum\limits_{j = 1}^n {\omega _j^\prime} \ln \omega _j^\prime + \lambda (\sum\limits_{j = 1}^n {\omega _j^\prime}  - 1)$.
Take the partial derivatives with respect to ${\omega _j^\prime}$ and $\lambda$:

\begin{align}
\begin{array}{r@{~}l}
\quad &\frac{{\partial L}}{{\partial \omega _j^\prime}} = \ln \omega _j^\prime + \lambda  + 1,\tag{A.2}
\end{array}
\end{align}
\begin{align}
\begin{array}{r@{~}l}
\quad &\frac{{\partial L}}{{\partial \lambda }} = \sum\limits_{j = 1}^n {\omega _j^\prime}  - 1.\tag{A.3}
\end{array}
\end{align}

Let $\frac{{\partial L}}{{\partial \omega _j^\prime}} = \frac{{\partial L}}{{\partial \lambda }} = 0$. When $\omega _j^\prime = \frac{1}{n}$, that is, ${\omega _j} = 1, \ j = 1,2, \cdots ,n$,  the entropy reaches maximum value $\ln n$.

Let $g(\omega _j^\prime) =  - \omega _j^\prime\ln \omega _j^\prime$, then ${g^{\prime \prime }}\left( {\omega _j^\prime} \right) = \frac{{d( - \ln \omega _j^\prime - 1)}}{{d(\omega _j^\prime)}} =  - \frac{1}{{\omega _j^\prime}} < 0$.

When $\omega _j^\prime \to 0$, $g'(\omega _j^\prime) =  - \ln \omega _j^\prime - 1 > 0$, then $g(\omega _j^\prime)$ is a concave function on the domain $\omega _j^\prime \in [0,1]$, and the minimum value must be taken on the boundaries. That is $g(0) = 0,g(1) = 0$.  Therefore, when $\omega _k^\prime = 1, \ \omega _j^\prime = 0, \ j \ne k$, the entropy reaches the minimum value 0.
\end{proof}

\section*{Appendix B}
\begin{lemma}\label{l1}\cite{10A.Salehi-Abari2019}
Assuming $W$ be the local empathic weight  matrix and $D$  the diagonal matrix formed by the diagonal elements of $W$, let $I - W + D = I - B$. Then $\rho (B) < 1$.
\end{lemma}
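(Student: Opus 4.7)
The plan is to exploit the row-stochasticity of $W$ together with the strict positivity of its diagonal to bound $\rho(B)$ by the infinity norm of $B$. First, I would unpack the definition: since $I - W + D = I - B$, we have $B = W - D$, so the entries of $B$ are $B_{ij} = w_{ij}$ for $i \neq j$ and $B_{ii} = 0$. The hypothesis $w_{ij} \ge 0$ for $i \ne j$ then guarantees that $B$ is a non-negative matrix.

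Next, I would compute the row sums of $B$. Because $W$ is row-stochastic, i.e.\ $w_{jj} + \sum_{k \ne j} w_{jk} = 1$, we get
\begin{equation*}
\sum_{k=1}^{n} B_{jk} \;=\; \sum_{k \ne j} w_{jk} \;=\; 1 - w_{jj},
\end{equation*}
and the standing assumption $w_{jj} > 0$ for every $j \in N$ forces $1 - w_{jj} < 1$ for each $j$. Consequently
\begin{equation*}
\|B\|_{\infty} \;=\; \max_{j \in N}\, \sum_{k=1}^{n} |B_{jk}| \;=\; \max_{j \in N}\, (1 - w_{jj}) \;<\; 1.
\end{equation*}

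Finally, I would invoke the classical inequality $\rho(A) \le \|A\|$, valid for any submultiplicative matrix norm (in particular the row-sum norm $\|\cdot\|_{\infty}$), which follows from the fact that if $\lambda$ is an eigenvalue of $A$ with unit eigenvector $v$, then $|\lambda| = \|Av\| \le \|A\|\,\|v\| = \|A\|$. Applying this to $B$ yields $\rho(B) \le \|B\|_{\infty} < 1$, which is the claim. Since this is really a one-line consequence of Gershgorin/Perron-Frobenius together with the row-stochasticity of $W$, there is no substantive obstacle; the only point to verify carefully is that \emph{every} $w_{jj}$ is strictly positive (not just non-negative), which is ensured by the explicit hypothesis $w_{jj} > 0$ for $j \in N$ stated in Section~2.1.
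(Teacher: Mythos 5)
Your proof is correct. Note that the paper itself does not prove Lemma~\ref{l1}: it is stated with a citation to Salehi-Abari et al.~\cite{10A.Salehi-Abari2019} and imported as a known result, so there is no in-paper argument to compare against. Your argument --- $B = W - D$ is non-negative with zero diagonal, its $j$-th row sum equals $1 - w_{jj} < 1$ by the row-stochasticity of $W$ and the standing hypothesis $w_{jj} > 0$, hence $\rho(B) \le \|B\|_{\infty} < 1$ --- is the standard one and supplies a self-contained justification for what the paper delegates to the reference. One cosmetic remark: the eigenvector argument for $\rho(A) \le \|A\|$ is clean precisely because $\|\cdot\|_{\infty}$ is an \emph{induced} (operator) norm; for a merely submultiplicative matrix norm one would argue via $\rho(A) = \lim_k \|A^k\|^{1/k}$ instead, but since you only apply the bound to the row-sum norm this does not affect the validity of your proof.
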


\begin{lemma}\label{l2}\cite{41A.Berman1994}
Assuming $A \in {C^{n \times n}}$ is irreducible, then $B = A - diag({a_{11}},{a_{22}}, \cdots ,{a_{nn}})$ is also irreducible.
\end{lemma}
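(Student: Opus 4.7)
The plan is to prove Lemma~\ref{l2} by reducing it to the graph-theoretic characterization of matrix irreducibility. It is standard that $A \in C^{n\times n}$ is irreducible in the sense of Definition~\ref{d1} if and only if its associated digraph $G(A)$---with vertex set $\{1,\ldots,n\}$ and an arc $(i,j)$ whenever $a_{ij}\neq 0$---is strongly connected, i.e.\ for every ordered pair of distinct vertices $(i,j)$ there exists a directed path from $i$ to $j$. First I would invoke this equivalence to recast irreducibility of both $A$ and $B = A - \mathrm{diag}(a_{11},\ldots,a_{nn})$ in terms of strong connectivity of $G(A)$ and $G(B)$.

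Next, observe that $B$ agrees with $A$ on every off-diagonal entry and is zero on the diagonal. Therefore the arc sets of $G(A)$ and $G(B)$ coincide except possibly for self-loops: $G(A)$ may possess a self-loop at $i$ when $a_{ii}\neq 0$, while $G(B)$ has none. Since strong connectivity depends only on paths between \emph{distinct} vertices, self-loops are irrelevant to the question. Hence $G(A)$ is strongly connected if and only if $G(B)$ is strongly connected, and combining this with the irreducibility/strong-connectivity equivalence yields that $A$ irreducible implies $B$ irreducible.

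If one prefers a proof directly from Definition~\ref{d1} without invoking the graph characterization, I would argue by contraposition. Suppose $B$ is reducible, so that there exists a permutation matrix $P$ with
\begin{equation*}
PBP^{T} = \begin{pmatrix} B_{11} & B_{12} \\ 0 & B_{22} \end{pmatrix}, \quad B_{11} \in C^{r\times r},\ B_{22}\in C^{(n-r)\times(n-r)}.
\end{equation*}
Conjugating $\mathrm{diag}(a_{11},\ldots,a_{nn})$ by $P$ merely permutes its diagonal entries, so $P\,\mathrm{diag}(a_{11},\ldots,a_{nn})\,P^{T}$ is still diagonal and hence has a zero lower-left $(n-r)\times r$ block. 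Adding this diagonal matrix to $PBP^{T}$ preserves the block-triangular shape, so
\begin{equation*}
PAP^{T} = \begin{pmatrix} B_{11}+D_{1} & B_{12} \\ 0 & B_{22}+D_{2} \end{pmatrix},
\end{equation*}
which contradicts the irreducibility of $A$. The only step that merits a moment's care is the observation that conjugation by a permutation preserves the diagonal structure (since $P$ acts on rows and columns by the same permutation); beyond this, the argument is pure bookkeeping and I foresee no substantive obstacle.
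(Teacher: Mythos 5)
Your proposal is correct. There is, however, nothing in the paper to compare it against: the paper states this lemma as a known fact imported from Berman and Plemmons \cite{41A.Berman1994} and gives no proof, so your argument is strictly additional content. Both of your routes work. The graph-theoretic one (irreducibility is equivalent to strong connectivity of the associated digraph, and adding or deleting self-loops cannot affect the existence of directed paths between distinct vertices) is the standard textbook reason, and it is consonant with the machinery the paper itself deploys later in Appendix B (Lemmas~\ref{l5}--\ref{l7} invoke exactly this equivalence). Your second, definition-level argument by contraposition is the more self-contained of the two: it needs nothing beyond Definition~\ref{d1}, and its only nontrivial step --- that conjugating a diagonal matrix by a permutation matrix yields a diagonal matrix, so that adding the diagonal back to $PBP^{T}$ cannot destroy the block upper-triangular pattern --- is verified correctly. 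Note that either argument in fact establishes the stronger two-sided statement that $A$ is irreducible if and only if $B$ is, which is more than the lemma asks for but costs nothing.
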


\begin{lemma}\label{l3}\cite{46C.D.Meyer2023}
Given $A \in {R^{n \times n}}$ is a nonnegative matrix. If $\rho (A) < 1$, then $A$ is irreducible if and only if ${(I - A)^{ - 1}} > 0$.
\end{lemma}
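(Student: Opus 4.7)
The plan is to reduce Theorem 2 to Lemmas 1--3 in Appendix B via the substitution $B := W - D$, so that $I - W + D = I - B$ and $G = (I-B)^{-1} D$. First I would verify the hypotheses of Lemma 3 for $B$: since $w_{ij}\ge 0$ for $i\ne j$ and $w_{jj}\ge 0$, the matrix $B$ is nonnegative (it equals $W$ with the diagonal zeroed out), and by Lemma 1 its spectral radius satisfies $\rho(B)<1$. Moreover, $D$ is a diagonal matrix whose diagonal entries $w_{jj}$ are all strictly positive.

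Next I would establish that irreducibility of $W$ is equivalent to irreducibility of $B=W-D$. Irreducibility depends only on the directed graph of off-diagonal nonzero entries of a matrix, and $W$ and $B$ have identical off-diagonal parts; Lemma 2 gives one direction (from $W$ irreducible to $B$ irreducible), and the converse follows by the same graph-theoretic observation, since adding the diagonal matrix $D$ back to $B$ does not introduce or remove any off-diagonal arc.

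With these reductions in place, the proof chains together as follows. If $W$ is irreducible then $B$ is irreducible; by Lemma 3 (applicable because $B\ge 0$ and $\rho(B)<1$), $(I-B)^{-1}>0$; and since $D$ is a positive diagonal matrix, $G=(I-B)^{-1}D$ is obtained by scaling each column of $(I-B)^{-1}$ by a strictly positive $w_{jj}$, which preserves positivity entry-by-entry. Conversely, if $G>0$, then because $D$ has strictly positive diagonal entries one can divide each column of $G$ by $w_{jj}$ to recover $(I-B)^{-1}>0$; Lemma 3 then forces $B$ to be irreducible, and hence $W$ is irreducible.

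The main obstacle is the bookkeeping in the reduction step --- carefully arguing that irreducibility is preserved under addition of a diagonal matrix, and that postmultiplication by a positive diagonal $D$ preserves (and reflects) strict entrywise positivity. Once these are nailed down, the three lemmas do essentially all the analytic work and the equivalence follows by a short chain of implications rather than any direct calculation with $(I-W+D)^{-1}$.
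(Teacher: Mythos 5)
Your proposal does not actually prove the statement in question. The statement is Lemma~\ref{l3} itself: for a nonnegative matrix $A$ with $\rho(A)<1$, irreducibility of $A$ is equivalent to $(I-A)^{-1}>0$. What you have written is instead a proof of Theorem~\ref{T2} (the equivalence between irreducibility of the local empathic weight matrix $W$ and positivity of the global matrix $G$), \emph{using} Lemma~\ref{l3} as a black box. Every analytic step in your argument --- the substitution $B=W-D$, the appeal to $\rho(B)<1$, the column-scaling by the positive diagonal of $D$ --- is downstream of the lemma you were asked to establish, so the content of Lemma~\ref{l3} is assumed rather than proved. (In the paper this lemma carries no proof at all; it is cited from Meyer's \emph{Matrix Analysis and Applied Linear Algebra}, and the reduction you describe is essentially the paper's proof of Theorem~\ref{T2} in Appendix~B.)

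A proof of the actual statement would go roughly as follows. Since $\rho(A)<1$, the Neumann series converges and $(I-A)^{-1}=\sum_{k=0}^{\infty}A^{k}$, a sum of nonnegative matrices. Hence the $(i,j)$ entry of $(I-A)^{-1}$ is strictly positive if and only if $(A^{k})_{ij}>0$ for some $k\ge 0$; for $i=j$ the term $k=0$ already contributes $1$, and for $i\ne j$ the condition $(A^{k})_{ij}>0$ for some $k\ge 1$ is equivalent to the existence of a directed path from $i$ to $j$ in the digraph of $A$. Therefore $(I-A)^{-1}>0$ entrywise if and only if that digraph is strongly connected, which (as in Lemma~\ref{l5}) is equivalent to irreducibility of $A$. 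None of this appears in your proposal, so the gap is not a bookkeeping issue but the entire argument for the lemma itself.
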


\begin{lemma}\label{l4}
Given $A,B\ge 0 \in {C^{n \times n}}$, and $A$ is an irreducible matrix, then $A + B$ is also irreducible.
\end{lemma}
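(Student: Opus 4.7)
I plan to argue directly by contradiction from Definition~\ref{d1}. Suppose, contrary to the claim, that $A+B$ is reducible. Then there exist a permutation matrix $P$ and an integer $1\le r\le n$ such that
\[
P(A+B)P^T=\begin{pmatrix} C_{11} & C_{12}\\ 0 & C_{22}\end{pmatrix},
\]
where $C_{11}\in\mathbb{C}^{r\times r}$ and $C_{22}\in\mathbb{C}^{(n-r)\times(n-r)}$. Since conjugation by $P$ merely permutes rows and columns, $PAP^T$ and $PBP^T$ remain entrywise nonnegative, and matrix addition is compatible with block partitioning, so that $P(A+B)P^T=PAP^T+PBP^T$ holds blockwise with the same partition induced by $r$.

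The crux of the argument is then the elementary observation that in a sum of two nonnegative matrices a zero entry forces both summands to vanish in that position. Applied to the $(2,1)$-block above, this forces the $(2,1)$-blocks of $PAP^T$ and $PBP^T$ to be identically zero. In particular $PAP^T$ itself has the block upper-triangular form of Definition~\ref{d1}, which contradicts the irreducibility of $A$. Hence $A+B$ must be irreducible.

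I do not anticipate any serious obstacle; the only point worth flagging is the nonnegativity-plus-zero-sum step just mentioned, which is where both hypotheses ($A,B\ge 0$ and $A$ irreducible) are used simultaneously. Everything else — the reuse of the same permutation $P$ and the same partition index $r$, and the stability of entrywise nonnegativity under conjugation by a permutation matrix — is routine. As a sanity check one could equivalently give a one-line graph-theoretic proof: the directed graph of $A+B$ contains every arc of the directed graph of $A$ (since $b_{ij}\ge 0$ cannot cancel a positive $a_{ij}$), and a supergraph of a strongly connected graph is strongly connected.
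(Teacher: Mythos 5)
Your proof is correct and follows essentially the same route as the paper's: assume $A+B$ is reducible, apply the same permutation and block partition to $PAP^{T}+PBP^{T}$, and use nonnegativity to conclude that the $(2,1)$-block of $PAP^{T}$ must vanish, contradicting the irreducibility of $A$. Your write-up is in fact slightly more explicit than the paper's about why the zero block forces both summands to vanish there, but the argument is the same.
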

\begin{proof}
Assuming $A+B$ is reducible, using Definition~\ref{d1}, there is a permutation matrix $P$, such that
\begin{align}
\begin{array}{r@{~}l}
PA{P^T} + PB{P^T} = \left( {\begin{array}{*{20}{c}}
E&C\\
0&D
\end{array}} \right), \ E \in {C^{r \times r}}, \ D \in {C^{(n - r) \times (n - r)}}.\tag{B.1}
\end{array}
\end{align}

Since both $A$ and $B$ are nonnegative matrices, both $PA{P^T}$ and $PB{P^T}$ must have the form at the right end of the above equation.
This contradicts the fact that $A$ is an irreducible matrix!
Therefore, $A + B$ is irreducible.
\end{proof}

The proof of Theorem~\ref{T2}.
\begin{proof}
Let's prove the necessity first.
Let $I-W+D=I-B$. Using Lemma~\ref{l1}, we have $\rho (B) < 1$.
Since $B = W-D$ and $W$ is irreducible, using Lemma~\ref{l2}, $B$ is also irreducible.
As $B \ge 0$ and $\rho (B) < 1$, using Lemma~\ref{l3},
we know that ${(I - B)^{ - 1}} = {(I - W + D)^{ - 1}} > 0$. Thus, every element of ${(I - W + D)^{ - 1}}$ is greater than 0, while $D$ is a diagonal matrix whose diagonal elements are ${w_{jj}} > 0,\ \forall j \in N$.
In particular, $G ={(I - W + D)^{-1}} D$ is each column entry of ${(I - W + D)^{ - 1}}$ multiplied by ${w_{jj}}, \ \forall j \in N$.
As a consequence, each element of $G$ is greater than 0, that is, $G = {(I-W + D)^{-1}}D>0$.

Then, let's prove sufficiency.
If $G = {(I-W+D)^{-1}}D>0$ and the diagonal entries of $D$ are all greater than 0,
then it follows ${(I - W + D)^{ - 1}}>0$.
Let $I-W+D=I-B$, then using Lemma~\ref{l1} we have $\rho (B) < 1$.
From Lemma~\ref{l3}, $B$ is irreducible.
While $B=W-D$, then $W=B+D$. By Lemma~\ref{l4}, it follows that $W$ is also irreducible.
\end{proof}

\begin{lemma}\label{l5}\cite{46C.D.Meyer2023}
$A$ is irreducible if and only if the direct graph $G$ is a strongly connected graph.
\end{lemma}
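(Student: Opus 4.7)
The plan is to prove Lemma~\ref{l5} as a biconditional by contrapositive in each direction, exploiting the standard correspondence between the zero pattern of $A$ and the absence of arcs in the directed graph $G(A)$ on vertex set $\{1,\ldots,n\}$ whose arcs are exactly the pairs $(i,j)$ with $a_{ij}\neq 0$.

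First I would show that reducibility of $A$ implies that $G(A)$ is not strongly connected. If $A$ is reducible, then by Definition~\ref{d1} there exists a permutation matrix $P$ and an index $1\le r\le n$ such that $PAP^{T}$ has an $(n-r)\times r$ zero block in the lower-left corner. The permutation $P$ simply relabels vertices so that a set $S_{1}$ of size $r$ occupies positions $1,\ldots,r$ and a set $S_{2}$ of size $n-r$ occupies the remaining positions. The vanishing lower-left block means that every entry indexed by $(i,j)$ with $i\in S_{2}$ and $j\in S_{1}$ is zero, so $G(A)$ contains no arc from any vertex of $S_{2}$ to any vertex of $S_{1}$. Since relabeling vertices leaves strong connectivity invariant, no vertex in $S_{2}$ can reach any vertex in $S_{1}$ by a directed path, so $G(A)$ is not strongly connected. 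By contraposition, strong connectivity of $G(A)$ forces irreducibility of $A$.

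For the reverse direction I would argue that failure of strong connectivity forces reducibility. Suppose there exist vertices $i,j$ such that no directed path in $G(A)$ leads from $i$ to $j$, and define the reachable set $S=\{k:k=i\text{ or there is a directed path from }i\text{ to }k\}$. Then $i\in S$ and $j\notin S$, so $S$ is a proper nonempty subset of the vertex set. The set $S$ is closed under out-arcs: if $k\in S$ and $a_{kl}\neq 0$, then concatenating the path from $i$ to $k$ with the arc $k\to l$ yields $l\in S$. Equivalently, $a_{kl}=0$ for every $k\in S$ and $l\notin S$. Now I would choose a permutation matrix $P$ that orders the vertices so that those in $V\setminus S$ occupy positions $1,\ldots,n-|S|$ and those in $S$ occupy the remaining positions; the matrix $PAP^{T}$ then has a zero block in the lower-left corner (rows indexed by $S$, columns by $V\setminus S$), matching the block form of Definition~\ref{d1} with $r=n-|S|$. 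Hence $A$ is reducible, and the contrapositive supplies the remaining implication.

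The main subtlety I anticipate is aligning the block decomposition of Definition~\ref{d1}, which places the zero block in the \emph{lower-left}, with the natural definition of the reachable set $S$; a careless choice of ordering would produce a zero block in the upper-right instead and fail to match the paper's convention. A secondary point is that arcs of $G(A)$ must be read from the nonzero pattern of $A$ regardless of sign, so that the argument is uniform for the complex matrices of Definition~\ref{d1}; no positivity or spectral radius hypothesis is needed for Lemma~\ref{l5} itself, in contrast to Lemma~\ref{l3}.
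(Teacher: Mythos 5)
Your proof is correct and is the standard argument; note that the paper does not prove this lemma at all but simply cites it as a known result from Meyer's \emph{Matrix Analysis and Applied Linear Algebra}, so there is no in-paper proof to diverge from. Both directions of your contrapositive argument are sound, and your care in placing the zero block in the lower-left to match the convention of Definition~\ref{d1} (ordering $V\setminus S$ before the out-closed reachable set $S$) is exactly the detail that is most often fumbled in this argument.
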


\begin{lemma}\label{l6}\cite{47W.T.Tutte2001}
A directed graph is strongly connected if and only if it has a loop that passes through each node at least once.
\end{lemma}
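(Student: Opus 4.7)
The plan is to prove both directions of this biconditional separately, using only the definitions of strong connectivity (every ordered pair of vertices is mutually reachable by directed paths) and of a closed directed walk---which is what ``loop'' denotes here (a closed sequence of arcs in which vertices and arcs may repeat), not a self-edge nor a cycle with distinct vertices. With that reading in hand, both implications reduce to straightforward concatenation arguments.

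For the forward direction (strongly connected implies the existence of such a loop), I would fix any ordering $v_1, v_2, \ldots, v_n$ of the vertex set. By strong connectivity, for each $i = 1, \ldots, n-1$ there exists a directed path $P_i$ from $v_i$ to $v_{i+1}$, and there also exists a directed path $P_n$ from $v_n$ back to $v_1$. Concatenating $P_1, P_2, \ldots, P_n$ end-to-end produces a single closed directed walk that passes through $v_1, v_2, \ldots, v_n$ in order, and therefore visits every vertex of the graph at least once.

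For the converse, suppose $C$ is a closed directed walk visiting every vertex. For any two vertices $u, v$, pick an occurrence of $u$ on $C$ and traverse $C$ forward from it; since $C$ is closed and visits $v$, the traversal eventually encounters $v$, so the segment traversed is a directed walk---and hence contains a directed path---from $u$ to $v$. Continuing along $C$ past $v$ until returning to some occurrence of $u$ yields, by the same reasoning, a directed path from $v$ to $u$. Thus every ordered pair of vertices is mutually reachable, which is precisely strong connectivity. The main obstacle, such as it is, is conceptual rather than technical: one must read ``loop'' as a closed walk permitting repeated vertices and arcs (otherwise the statement would be about Hamiltonian cycles and would be false in general), and handle trivial cases such as $n = 1$ cleanly without circularity.
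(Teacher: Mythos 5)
The paper offers no proof of this lemma at all---it is imported verbatim from Tutte's \emph{Graph Theory} as a known classical fact---so there is nothing to compare your argument against except the standard textbook proof, which is exactly what you have reproduced. Both directions of your argument are correct: the forward direction by concatenating, for an arbitrary vertex ordering $v_1,\dots,v_n$, directed paths $v_1\to v_2\to\cdots\to v_n\to v_1$ guaranteed by strong connectivity; the converse by observing that from any occurrence of $u$ on the closed walk one can follow the walk (wrapping around, which is where closedness is used) until $v$ is reached, and then onward back to $u$. Your remark that ``loop'' must be read as a closed directed walk with repetitions allowed, rather than a cycle with distinct vertices, is the right disambiguation---and it matters for the paper, since Corollary~\ref{C1} and the loop constraints (10-1)--(10-2) in Model~10 rely on precisely this reading. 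The only loose end is the $n=1$ case, which you flag but do not resolve; it is a pure matter of convention (whether the empty walk at a single vertex counts as a loop through it) and is immaterial here because the paper's networks have $n\ge 2$ decision-makers and, in any case, every node carries a self-loop since $w_{jj}>0$.
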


\begin{lemma}\label{l7}
$A$ is irreducible if and only if the direct graph $G$ has a loop that passes through each node at least once.
\end{lemma}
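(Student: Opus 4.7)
The statement to be proved, Lemma~\ref{l7}, asserts a biconditional between irreducibility of $A$ and the existence of a loop in the associated directed graph $G$ that passes through each node at least once. The plan is to prove it by simply chaining together the two preceding lemmas, since both are biconditionals that share a common middle term, namely ``$G$ is strongly connected.''

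First, I would invoke Lemma~\ref{l5}, which gives the equivalence: $A$ is irreducible if and only if the directed graph $G$ is strongly connected. This handles the algebraic side of the statement, translating the matrix-theoretic property of irreducibility into a purely graph-theoretic condition on $G$. Next, I would invoke Lemma~\ref{l6}, which gives the equivalence: a directed graph is strongly connected if and only if it has a loop that passes through each node at least once. Composing these two biconditionals yields exactly the desired statement.

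The proof can therefore be written in essentially two lines: $A$ irreducible $\iff$ $G$ strongly connected (by Lemma~\ref{l5}) $\iff$ $G$ has a loop passing through each node at least once (by Lemma~\ref{l6}). Since both lemmas are stated earlier and may be assumed, there is no real obstacle in this proof; the only ``work'' is to cite the two lemmas in the correct order and observe that biconditionals are transitive. No further verification of the individual directions is needed, because each cited lemma already handles both directions of its respective equivalence.
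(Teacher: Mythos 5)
Your proposal is correct and matches the paper's own proof, which likewise obtains Lemma~\ref{l7} by chaining Lemma~\ref{l5} (irreducibility $\iff$ strong connectivity) with Lemma~\ref{l6} (strong connectivity $\iff$ existence of a loop through every node). No further comment is needed.
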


\begin{proof}
It is proved with the help of Lemma~\ref{l5} and Lemma~\ref{l6}.
\end{proof}

The proof of Corollary~\ref{C1}.
\begin{proof}
The empathic network corresponding to the local empathic weight matrix $W$ has only more self-loops at each node than its direct graph, but when the empathic network has a loop its direct graph also has this loop. Then, the proof is obtained with the help of Lemma~\ref{l7} and Theorem~\ref{T2}.
\end{proof}

The proof of Corollary~\ref{C2}.
\begin{proof}
According to the sufficient condition of Corollary~\ref{C1}, we know that the empathic network corresponding to the empathic weight matrix $W$ has a loop. Thus, we obtain the form of $W$.
\end{proof}

\end{document}